\theoremstyle{plain}
\newtheorem{theorem}                 {Theorem}      [section]
\newtheorem{proposition}  [theorem]  {Proposition}
\newtheorem{lemma}        [theorem]  {Lemma}
\theoremstyle{definition}
\newtheorem{example}      [theorem]  {Example}
\newtheorem{remark}       [theorem]  {Remark}
\newtheorem{definition}   [theorem]  {Definition}
\DeclareMathOperator{\cst}{constant}
\def \r{\mbox{${\mathbb R}$}}
\def \c{\mbox{${\mathbb C}$}}
\def \h{\mbox{${\mathbb H}$}}
\def \s{\mbox{${\mathbb S}$}}
\def \S{\mbox{${\mathrm {SL}}(2,\r)_\tau$}}
\def \n{\mbox{${{\nabla}}^\tau$}}
\begin{document}

\title{Helix surfaces in the special linear group}

\author{S. Montaldo}

\address{Universit\`a degli Studi di Cagliari\\
Dipartimento di Matematica e Informatica\\
Via Ospedale 72\\
09124 Cagliari}
\email{montaldo@unica.it}

\author{I. I. Onnis}
\address{Departamento de Matem\'{a}tica, C.P. 668\\ ICMC,
USP, 13560-970, S\~{a}o Carlos, SP\\ Brasil}
\email{onnis@icmc.usp.br}

\author{A. Passos Passamani}
\address{Departamento de Matem\'{a}tica, C.P. 668\\ ICMC,
USP, 13560-970, S\~{a}o Carlos, SP\\ Brasil}
\email{apoenapp@icmc.usp.br}

%\date{September 2004}
%\classification{53B25}
\keywords{Special linear group, helix surfaces, constant angle surfaces, homogeneous spaces.}
\thanks{The first author was supported by P.R.I.N. 2010/11 -- Variet\`a reali e complesse: geometria, topologia e analisi armonica -- Italy and INdAM. The third author was supported by Capes--Brazil}

\begin{abstract}
We characterize helix surfaces (constant angle surfaces) in the special linear group $\mathrm{SL}(2,\r)$. In particular, we give an explicit local description of these surfaces by means of a suitable curve and  a 1-parameter family of isometries of $\mathrm{SL}(2,\r)$.
\end{abstract}

\maketitle

\section{Introduction}

In recent years much work has been done to understand the geometry of surfaces whose unit normal vector field forms a constant angle with a fixed field of directions of the ambient space. These surfaces are called {\em helix surfaces} or {\em constant angle surfaces} and they have been studied in most of the $3$-dimensional geometries. In \cite{CDS07} Cermelli and Di Scala analyzed the case of constant angle surfaces in $\r^3$ obtaining a remarkable relation with a Hamilton-Jacobi equation and showing their application to  equilibrium configurations of liquid crystals. Later, Dillen--Fastenakels--Van der Veken--Vrancken  (\cite{DFVV07}), and  Dillen--Munteanu (\cite{DM09}), classified the surfaces making a constant angle with the $\r$-direction in the product spaces $\s^2\times\r$ and $\h^2\times\r$, respectively. Moreover, helix submanifolds have been studied in higher dimensional euclidean spaces and product spaces in \cite{DSRH10,DSRH09,RH11}.

The spaces $\s^2\times\r$ and $\h^2\times\r$ can be seen as two particular cases of Bianchi-Cartan-Vranceanu spaces (BCV-spaces) which include all 3-dimensional homogeneous metrics whose group of isometries has dimension $4$ or $6$, except for those of constant negative sectional curvature. A crucial feature of BCV-spaces is that they admit a  Riemannian submersion onto a surface of constant Gaussian curvature, called the Hopf fibration, that, in the cases of $\s^2\times\r$ and $\h^2\times\r$, it is the natural projection onto the first factor. Consequently, one can consider the angle $\vartheta$ that the unit normal vector field of a surface in a BCV-space forms with the Hopf vector field, which is, by definition, the vector field tangent to the fibers of the Hopf fibration. This angle $\vartheta$ has a crucial role in the study of surfaces in BCV-spaces as shown by Daniel, in \cite{B}, where he proved that the equations of Gauss and Codazzi are given in terms of the function $\nu=\cos\vartheta$ and that this angle is one of the fundamental invariants for a surface in BCV-spaces.
Consequently, in \cite{FMV11},  the authors considered  the surfaces in a BCV-space for which the angle $\vartheta$ is constant, giving a complete local classification in the case that the BCV-space is the Heisenberg space $\h_3$.

Later, L\'opez--Munteanu, in \cite{LM11}, defined and classified two types of constant angle surfaces in the homogeneous 3-manifold $\mathrm{Sol}_3$, whose isometry group has dimension 3. Also, Montaldo--Onnis, in \cite{MO}, characterized helix surfaces in the 1-parameter family of Berger spheres $\s^3_\epsilon$, with $\epsilon>0$, proving that, locally, a helix surface is  determined by a suitable 1-parameter family of isometries of the Berger sphere and by a geodesic of a $2$-torus in the $3$-dimensional sphere.

This paper is a continuation of our work \cite{MO} and it is devoted to the study and characterization of helix surfaces in the  homogeneous 3-manifold given by the special linear group $\mathrm{SL}(2,\r)$ endowed with a suitable $1$-parameter family $g_{\tau}$ of metrics that we shall describe in section~\ref{preli}.

Our study of helix surfaces in $(\mathrm{SL}(2,\r),g_{\tau})$ will depend on a constant $B:=(\tau^2+1)\cos^2\vartheta-1$, where $\vartheta$ is the constant angle between
the normal to the surface and the Hopf vector field of $\mathrm{SL}(2,\r)$. A similar constant appeared also in the study of helix surfaces in the Berger sphere (see \cite{MO}) but in that case the constant was always positive. Thus we shall divide our study according to the three possibilities: $B>0$, $B=0$ and $B<0$.

\section{Preliminaries}\label{preli}

Let $\r_2^4$ denote the $4$-dimensional pseudo-Euclidean space endowed with the semi-definite inner product of signature $(2,2)$ given by
$$
\langle v,w\rangle= v_1\,w_1+v_2\,w_2-v_3\,w_3-v_4\,w_4\,,\quad v,w\in\r^4.
$$
We identify the special linear group with
$$
\mathrm{SL}(2,\r)=\{(z,w)\in\c^2 \colon |z|^2-|w|^2=1\}=\{v\in\r_2^4 \colon \langle v,v\rangle=1\}\subset\r_2^4
$$
and we shall use the Lorentz model of the hyperbolic plane with constant Gauss curvature $-4$, that is
$$
\h^2(-4)=\{(x,y,z)\in\r^3_1\colon x^2+y^2-z^2=-1/4\},
$$
where $\r^3_1$ is the Minkowski $3$-space. Then the Hopf map $\psi:\mathrm{SL}(2,\r)\to\h^2(-4)$
given by
$$
\psi(z,w)=\frac{1}{2}\,(2z\bar{w},|z|^2+|w|^2)
$$
is a submersion, with circular fibers, and if we put
$$
X_1(z,w)=(iz,iw),\quad X_2(z,w)=(i\bar{w},i\bar{z}),\quad X_3(z,w)=(\bar{w},\bar{z}),
$$
we have that $X_1$ is a vertical vector field while $X_2$, $X_3$ are horizontal. The vector field $X_1$ is called the {\em Hopf vector field}.

We shall endow $\mathrm{SL}(2,\r)$ with the $1$-parameter family of metrics $g_\tau$, $\tau>0$, given by
$$
g_\tau(X_i,X_j)=\delta_{ij},\quad g_\tau(X_1,X_1)=\tau^2,\quad g_\tau(X_1,X_j)=0,\quad i,j\in\{2,3\},
$$
which renders the Hopf map $\psi:(\mathrm{SL}(2,\r),g_{\tau})\to\h^2(-4)$ a Riemannian submersion.

{For those familiar with the notations of Daniel \cite{B}, we point out that $(\mathrm{SL}(2,\r),g_{\tau})$ corresponds to a model for a homogeneous space $E(k, \tau)$ with curvature of the basis $k=-4$ and bundle curvature $\tau>0$.}

With respect to the inner product in $\r_2^4$ the metric $g_{\tau}$ is given by
\begin{equation}\label{eq-def-gtau}
g_{\tau}(X,Y)=-\langle X,Y\rangle+(1+\tau^2)\langle X, X_1\rangle \langle Y, X_1\rangle\,.
\end{equation}
From now on, we denote $(\mathrm{SL}(2,\r),g_\tau)$ with $\S$. Obviously
\begin{equation}\label{eq-basis}
    E_1=-\tau^{-1}\,X_1,\quad
     E_2=X_2,\quad
     E_3=X_3,
\end{equation}
is an orthonormal  basis on $\S$ and the Levi-Civita connection $\n$ of $\S$ is given by {(see, for example, \cite{t})}:
\begin{equation}
\begin{aligned}
&\n_{E_{1}}E_{1}=0,\quad \n_{E_{2}}E_{2}=0,\quad \n_{E_{3}}E_{3}=0,\\
&\n_{E_{1}}E_{2}=-\tau^{-1}(2+\tau^2)E_{3}, \quad \n_{E_{1}}E_{3}=\tau^{-1}(2+\tau^2)E_{2},\\
 &\n_{E_{2}}E_{1}=-\tau E_{3},\quad \n_{E_{3}}E_{1}=\tau E_{2},\quad
 \n_{E_{3}}E_{2}=-\tau E_{1}=-\n_{E_{2}}E_{3}.
\end{aligned}
\label{nabla}
\end{equation}

Finally, we recall that the isometry group of $\S$ is the $4$-dimensional indefinite unitary group $\mathrm{U}_1(2)$
that can be identified with:
$$
\mathrm{U}_1(2)=\{A\in \mathrm{O}_2(4)\colon AJ_1=\pm J_1A\}\,,
$$
where $J_1$ is the complex structure of $\r^4$ defined by
$$
J_1 = \left(\begin{matrix}J & 0 \\ 0 & J\end{matrix}\right)\,,\quad J = \left(\begin{matrix}0 & -1 \\ 1 & 0\end{matrix}\right)\,,
$$
while
$$
\mathrm{O}_2(4)=\{A\in \mathrm{GL}(4,\r)\colon A^t=\epsilon\,A^{-1}\,\epsilon\},\qquad \epsilon=\begin{pmatrix}I&0\\0&-I\end{pmatrix},\quad I=\begin{pmatrix}1&0\\0&1\end{pmatrix}
$$
is the indefinite orthogonal group.

We observe that  $\mathrm{O}_2(4)$ is the group of $4\times 4$ real matrices preserving the semi-definite inner product of $\r_2^4$.

Suppose now we are given a 1-parameter family $A(v)\,, v\in(a,b)\subset\r$, consisting of $4\times 4$ indefinite orthogonal matrices commuting (anticommuting, respectively) with $J_1$. In order to describe explicitly the family $A(v)$, we shall use two product structures of $\r^4$, namely
$$
 J_{2} =\begin{pmatrix}
 0&0 & 0 & 1 \\
 0&0 & 1 & 0 \\
 0&1& 0 & 0 \\
  1&0 & 0 & 0 \\
 \end{pmatrix}\,,\qquad
 J_{3} =\begin{pmatrix}
 0&0 & 1 &0 \\
 0&0 & 0 & -1 \\
 1&0& 0 & 0 \\
0&-1 & 0 & 0 \\
 \end{pmatrix}\,.
 $$
Since $A(v)$ is an indefinite orthogonal matrix, the first row must be a unit vector ${\mathbf r}_1(v)$ of $\r^4_2$ for all $v\in(a,b)$. Thus, without loss of generality, we can take
$$
{\mathbf r}_1(v)=(\cosh\xi_1(v)\cos\xi_2(v), -\cosh\xi_1(v)\sin\xi_2(v), \sinh\xi_1(v)\cos\xi_3(v),-\sinh\xi_1(v)\sin\xi_3(v))\,,
$$
for some real functions $\xi_1,\xi_2$ and $\xi_3$ defined in $(a,b)$. Since $A(v)$ commutes  (anticommutes, respectively) with $J_1$ the second row of $A(v)$ must be ${\mathbf r}_2(v)=\pm J_{1}{\mathbf r}_1(v)$. Now, the four vectors $\{{\mathbf r}_1, J_1{\mathbf r}_1, J_2{\mathbf r}_1,J_3{\mathbf r}_1\}$ form a pseudo-orthonormal basis of $\r^4_2$, thus the third row ${\mathbf r}_3(v)$ of $A(v)$ must be a linear combination of them. Since ${\mathbf r}_3(v)$ is unit and it is orthogonal to both ${\mathbf r}_1(v)$ and $J_1{\mathbf r}_1(v)$, there exists a function $\xi(v)$ such that
$$
{\mathbf r}_3(v)=\cos\xi(v) J_2 {\mathbf r}_1(v)+\sin\xi(v) J_3 {\mathbf r}_1(v)\,.
$$
Finally the fourth row of $A(v)$ is ${\mathbf r}_4(v)=\pm J_1{\mathbf r}_3(v)=\mp\cos\xi(v) J_3 {\mathbf r}_1(v)\pm\sin\xi(v) J_2 {\mathbf r}_1(v)$.
This means that any $1$-parameter family $A(v)$ of $4\times 4$ indefinite orthogonal matrices commuting (anticommuting, respectively) with $J_1$
can be described by four functions $\xi_1,\xi_2,\xi_3$ and $\xi$ as
\begin{equation}\label{eq-descrizione-A}
A(\xi,\xi_1,\xi_2,\xi_3)(v)=
\begin{pmatrix}
{\mathbf r}_1(v)\\
\pm J_1{\mathbf r}_1(v)\\
\cos\xi(v) J_2 {\mathbf r}_1(v)+\sin\xi(v) J_3 {\mathbf r}_1(v)\\
\mp\cos\xi(v) J_3 {\mathbf r}_1(v)\pm\sin\xi(v) J_2 {\mathbf r}_1(v)
\end{pmatrix}\,.
\end{equation}

\section{Constant angle surfaces}
We start this section giving the definition of constant angle surface in  $\S$.

\begin{definition}
We say that a surface in the special linear group $\S$ is a {\it helix surface} or a {\it constant angle surface} if the angle $\vartheta\in [0,\pi)$ between the unit normal vector field and the unit Killing vector field $E_1$ (tangent to the fibers of the Hopf fibration) is constant at every point of the surface.
\end{definition}

Let $M^2$ be an oriented helix surface in $\S$ and let  $N$ be a unit normal vector field. Then, by definition,
$$
|g_{\tau}(E_1,N)|=\cos\vartheta,
$$
for fixed $\vartheta\in[0,\pi/2]$. Note that $\vartheta\neq 0$. In fact, if it were zero then the vector fields $E_2$ and $E_3$   would be tangent to the surface $M^2$, which is absurd since  the horizontal distribution of the Hopf map is not integrable. If $\vartheta=\pi/2$, we have that $E_1$ is always tangent to $M$ and, therefore, $M$ is a Hopf cylinder. Therefore, from now on we assume that the constant angle $\vartheta\neq \pi/2,0$.

The Gauss and Weingarten formulas are
\begin{equation}\label{gauss-wein}\begin{aligned}
    \n_X Y&=\nabla_X Y+\alpha(X,Y),\\
    \n_X N&=-A(X),
    \end{aligned}
\end{equation}
where with $A$ we have indicated  the shape operator of $M$ in $\mathrm{SL}(2,\r)_\tau$, with $\nabla$ the induced Levi-Civita connection on $M$ and by $\alpha$  the second fundamental form of $M$ in $\mathrm{SL}(2,\r)_\tau$.
Projecting $E_1$ onto the tangent plane to $M$ we have $$E_1=T+\cos\vartheta\, N,$$ where $T$ is the tangent part which satisfies $g_\tau(T,T)=\sin^2\vartheta.$

For all $X\in TM$, we have that
\begin{equation}\label{eq1}\begin{aligned}
    \n_X E_1&=\n_X T-\cos\vartheta\,A(X)\\&=\nabla_X T+g_\tau(A(X),T)\,N-\cos\vartheta\,A(X).\end{aligned}
\end{equation}
On the other hand, if $X=\sum X_i E_i$,
\begin{equation}\label{eq2}\begin{aligned}
    \n_X E_1&=\tau\,(X_3 E_2-X_2 E_3)=
    \tau\, X\wedge E_1\\&=\tau\,g_\tau(JX,T)\,N-\tau\,\cos\vartheta JX,
    \end{aligned}
\end{equation}
where $JX=N\wedge X$ denotes the rotation of angle $\pi/2$ on $TM$.
Identifying the tangent and normal component of \eqref{eq1} and \eqref{eq2} respectively, we obtain
\begin{equation}\label{eq3}
    \nabla_X T= \cos\vartheta\,(A(X)-\tau\,JX)
\end{equation}
and
\begin{equation}\label{eq4}
    g_\tau(A(X)-\tau\,JX,T)=0.
\end{equation}

\begin{lemma}\label{princ}
Let $M^2$ be an oriented helix surface in $\S$ with constant angle $\vartheta$. Then, we have the followings properties.
\begin{itemize}
  \item[(i)] With respect to the basis $\{T,JT\}$, the matrix associates to the shape operator $A$ takes the form
$$
A=\left(
\begin{array}{cc}
0 & -\tau \\
-\tau & \lambda\\
 \end{array}
 \right),
  $$
  for some function $\lambda$ on $M$.
  \item[(ii)] The Levi-Civita connection $\nabla$ of $M$ is given by
  $$\nabla_T T=-2\tau\cos\vartheta\, JT,\qquad \nabla_{JT} T=\lambda\cos\vartheta\, JT,$$
  $$\nabla_T JT=2\tau\cos\vartheta\, T,\qquad \nabla_{JT} JT=-\lambda\cos\vartheta\, T.$$
  \item[(iii)] The Gauss curvature of $M$ is constant and satisfies $$K=-4(1+\tau^2)\,\cos^2\vartheta.$$
  \item[(iv)] The function $\lambda$ satisfies the equation
  \begin{equation}\label{lambda}
    T \lambda+\lambda^2\,\cos\vartheta+4B\,\cos\vartheta=0,
  \end{equation}
where $B:=(\tau^2+1)\cos^2\vartheta-1.$
\end{itemize}
\end{lemma}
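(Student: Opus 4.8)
The plan is to derive all four statements from the structural equations \eqref{eq3} and \eqref{eq4} together with the connection \eqref{nabla}, working entirely in the orthonormal-ish frame $\{T,JT\}$ on $M$ (noting $g_\tau(T,T)=g_\tau(JT,JT)=\sin^2\vartheta$, so it is really $\sin^2\vartheta$ times an orthonormal frame).

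For (i): write the shape operator $A$ as a symmetric matrix in the basis $\{T,JT\}$, say $A(T)=aT+bJT$, $A(JT)=bT+\lambda JT$ (symmetry of $A$ with respect to $g_\tau$ forces the off-diagonal entries to agree). Plugging $X=T$ into \eqref{eq4} gives $g_\tau(A(T)-\tau JT,T)=0$; since $JT\perp T$ and $g_\tau(T,T)=\sin^2\vartheta\neq0$, this yields $a=0$. Plugging $X=JT$ into \eqref{eq4} gives $g_\tau(A(JT)-\tau J(JT),T)=g_\tau(A(JT)+\tau T,T)=0$, hence the coefficient of $T$ in $A(JT)$ equals $-\tau$, i.e. $b=-\tau$. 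This establishes the claimed form of $A$.

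For (ii): now use \eqref{eq3}, $\nabla_X T=\cos\vartheta(A(X)-\tau JX)$. Taking $X=T$: $A(T)-\tau JT=-2\tau JT$, so $\nabla_T T=-2\tau\cos\vartheta\,JT$. Taking $X=JT$: $A(JT)-\tau J(JT)=(-\tau T+\lambda JT)+\tau T=\lambda JT$, so $\nabla_{JT}T=\lambda\cos\vartheta\,JT$. The formulas for $\nabla_T JT$ and $\nabla_{JT}JT$ then follow by differentiating $g_\tau(T,JT)=0$ and $g_\tau(JT,JT)=\sin^2\vartheta$ (equivalently, $\nabla$ is metric and $J$ is a $\nabla$-parallel rotation by $\pi/2$, so $\nabla_X JT=J\nabla_X T$), which swaps $T\leftrightarrow -JT$ appropriately and produces the signs shown. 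For (iii): compute the curvature $K$ of $M$ from these connection formulas via $R(T,JT)T=\nabla_T\nabla_{JT}T-\nabla_{JT}\nabla_T T-\nabla_{[T,JT]}T$, where $[T,JT]=\nabla_T JT-\nabla_{JT}T=2\tau\cos\vartheta\,T-\lambda\cos\vartheta\,JT$; the terms involving $T\lambda$ will cancel, and after normalizing by $g_\tau(T,T)g_\tau(JT,JT)-g_\tau(T,JT)^2=\sin^4\vartheta$ one gets a constant. To identify that constant as $-4(1+\tau^2)\cos^2\vartheta$, it is cleanest to instead invoke the Gauss equation $K=K^{\text{amb}}_{\text{sect}}+\det A/(\text{area element})$: the sectional curvature of $\S$ on the plane $TM$ and $\det$ of the matrix in (i) (which is $-\tau^2$) combine to the stated value; the ambient sectional curvatures are read off from \eqref{nabla}.

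For (iv): apply the compatibility/Codazzi-type identity. The function $\lambda$ is essentially a component of $A$, and differentiating the relations of (i)–(ii) must be consistent. Concretely, compute $T\lambda=T\,g_\tau(A(JT),JT)/\sin^2\vartheta$ using that $A$ satisfies the Codazzi equation $(\nabla_X A)Y-(\nabla_Y A)X=(R^{\text{amb}}(X,Y)N)^{\top}$; evaluating at $X=T$, $Y=JT$ and taking the $JT$-component produces exactly $T\lambda+\lambda^2\cos\vartheta+4B\cos\vartheta=0$ once the ambient curvature term (computed from \eqref{nabla}) is worked out, with $B=(\tau^2+1)\cos^2\vartheta-1$ collecting the constants. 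I expect the Codazzi computation in step (iv) to be the main obstacle: one must correctly expand $(\nabla_T A)(JT)$ and $(\nabla_{JT}A)(T)$ using the connection formulas of (ii) — which themselves contain $\lambda$ — and correctly compute the tangential part of the ambient curvature tensor applied to $N$ from \eqref{nabla}; keeping track of the $\cos\vartheta$ factors and the $\sin^2\vartheta$ normalizations is where errors are most likely to creep in. An alternative, possibly shorter route for (iv) is to differentiate the Gauss-curvature expression in (iii) against the explicit connection in (ii) applied to a local function, but the Codazzi approach is the most systematic.
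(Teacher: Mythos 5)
Your proposal follows essentially the same route as the paper: (i) from \eqref{eq4}, (ii) from \eqref{eq3} together with metric compatibility, (iii) from the Gauss equation with $\det A=-\tau^2$, and (iv) from the Codazzi equation evaluated at $X=T$, $Y=JT$; the only real difference is that the paper imports the Gauss and Codazzi equations of $\S$, with the ambient curvature terms already packaged, from Daniel's Corollary~3.2, whereas you propose to extract those terms from \eqref{nabla} yourself. One small inaccuracy in an aside you do not ultimately rely on: in the direct intrinsic computation of $R(T,JT)T$ from (ii), the $(T\lambda)\cos\vartheta\,JT$ term does \emph{not} cancel --- constancy of $K$ is only visible along that route after substituting \eqref{lambda}, which is precisely why the Gauss-equation route you (and the paper) actually use is the right one.
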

\begin{proof} Point (i) follows directly from \eqref{eq4}.
From \eqref{eq3} and using $$g_\tau(T,T)=g_\tau(JT,JT)=\sin^2\vartheta,\quad g_\tau(T,JT)=0,$$ we obtain (ii). From the Gauss equation in $\S$ (we refer to the equation in Corollary~3.2 of \cite{B} with $\nu=\cos\theta$ and $k=-4$),  and (i), we have that the Gauss curvature of $M$ is given by
$$\begin{aligned}K&=\det A+\tau^2-4(1+\tau^2)\,\cos^2\vartheta\\ &=-4(1+\tau^2)\,\cos^2\vartheta.\end{aligned}$$
 Finally,  \eqref{lambda} follows from the Codazzi equation (see \cite{B}): $$\nabla_X A(Y)-\nabla_Y A(X)-A[X,Y]=-4(1+\tau^2)\,\cos\vartheta\,(g_\tau(Y,T)X-g_\tau(X,T)Y)),$$ putting $X=T$, $Y=JT$ and using (ii).
 {In fact, it is easy to check that}
 $$-4(1+\tau^2)\,\cos\vartheta\,(g_\tau(JT,T)T-g_\tau(T,T)JT))=4(1+\tau^2)\cos\vartheta\,\sin^2\vartheta\, JT$$ and
 $$\begin{aligned}&\nabla_T A(JT)-\nabla_{JT} A(T)-A[T,JT]\\&=
 \nabla_T (-\tau\, T+\lambda\, JT)-\nabla_{JT} (-\tau\, JT)-A(2\tau \cos\vartheta\,T-\lambda \cos\vartheta\, JT)\\&=
 (4\tau^2\cos\vartheta+T(\lambda)+\lambda^2\,\cos\vartheta)\,JT.
 \end{aligned}$$
\end{proof}
As $g_\tau (E_1,N)=\cos\vartheta$, there exists a smooth function $\varphi$ on $M$ such that
$$N=\cos\vartheta E_1+\sin\vartheta\cos\varphi\,E_2+\sin\vartheta\sin\varphi\,E_3.$$
Therefore
\begin{equation}\label{eq:def-T}T=E_1-\cos\vartheta\,N=\sin\vartheta\,[\sin\vartheta\,E_1-\cos\vartheta\cos\varphi\,E_2-\cos\vartheta\sin\varphi\,E_3]
\end{equation}
and $$JT=\sin\vartheta\,(\sin\varphi\,E_2-\cos\varphi\,E_3).$$
Also
\begin{equation}\begin{aligned}\label{eqTJ}
    A(T)&=-\n_T N=(T \varphi-\tau^{-1}(2+\tau^2)\,\sin^2\vartheta+\tau\cos^2\vartheta)\,JT,\\
     A(JT)&=-\n_{JT} N=(JT\varphi)\,JT-\tau\,T.
\end{aligned}\end{equation}
Comparing \eqref{eqTJ} with (i) of Lemma~\ref{princ}, it results that
\begin{equation}\label{eqTJ1}\left\{\begin{aligned}
JT\varphi&=\lambda,\\
T\varphi&=-2\tau^{-1}\,B.
\end{aligned}
\right.
\end{equation}
We observe that, as $$[T,JT]=\cos\vartheta\,(2\tau\, T-\lambda\,JT),$$ the compatibility condition of system~\eqref{eqTJ1}:
$$(\nabla_T JT-\nabla_{JT} T)\varphi=[T,JT]\varphi=T (JT\varphi)-JT (T\varphi)$$ is equivalent to
\eqref{lambda}.

We now choose local coordinates $(u,v)$ on $M$ such that
\begin{equation}\label{eq:local-coordinates}
\partial_u=T.
\end{equation}
 Also, as $\partial_v$ is tangent to $M$, it can be written in the form
 \begin{equation}\label{eq-definition-Fv}
 \partial_v=a\,T+b\,JT\,,
 \end{equation}
  for certain functions $a=a(u,v)$ and $b=b(u,v)$. As
$$0=[\partial_u,\partial_v]=(a_u+2\tau b\cos\vartheta)\,T+(b_u-b\lambda\cos\vartheta)\,JT,$$ then
\begin{equation}\label{eqab}\left\{\begin{aligned}
a_u&=-2\tau b\cos\vartheta,\\
b_u&=b\lambda\cos\vartheta.
\end{aligned}
\right.
\end{equation}
Moreover, the equation \eqref{lambda} of Lemma~\ref{princ} can be written as
\begin{equation}\label{eq-lambda-du}
\lambda_u+\cos\vartheta\,\lambda^2+4B\,\cos\vartheta=0.
\end{equation}
Depending on the value of $B$, by integration of \eqref{eq-lambda-du}, we have the following three possibilities.
\begin{itemize}
\item [(i)] If $B=0$
$$\lambda(u,v)=\frac{1}{u\,\cos\vartheta+\eta(v)},$$
for some smooth function $\eta$ depending on $v$. Thus the solution of system~\eqref{eqab} is given by
\begin{equation}\left\{\begin{aligned}\nonumber
a(u,v)&=-\tau\,u\,\cos\vartheta\,(u\,\cos\vartheta+2\,\eta(v)),\\
b(u,v)&=u\,\cos\vartheta+\eta(v).
\end{aligned}
\right.
\end{equation}
\item[(ii)]
If $B>0$
\begin{equation}\label{eqlambda}\nonumber
    \lambda(u,v)=2\,\sqrt{B}\tan (\eta(v)-2\cos\vartheta \sqrt{B}\,u),
\end{equation}
for some smooth function $\eta$ depending on $v$ and system~\eqref{eqab} has the solution
$$\left\{\begin{aligned}
a(u,v)&=\frac{\tau}{\sqrt{B}}\sin (\eta(v)-2\cos\vartheta \sqrt{B}\,u),\\
b(u,v)&=\cos(\eta(v)-2\cos\vartheta \sqrt{B}\,u).
\end{aligned}
\right.
$$
\item[(iii)] If $B<0$
\begin{equation}\label{eqlambda2}\nonumber
    \lambda(u,v)=2\,\sqrt{-B}\tanh ({\eta}(v)+2\cos\vartheta \sqrt{-B}\,u),
\end{equation}
for some smooth function ${\eta}$ depending on $v$.
Solving the system~\eqref{eqab}, we have
$$\left\{\begin{aligned}
a(u,v)&=-\frac{\tau}{\sqrt{-B}}\sinh ({\eta}(v)+2\cos\vartheta \sqrt{-B}\,u),\\
b(u,v)&=\cosh({\eta}(v)+2\cos\vartheta \sqrt{-B}\,u).
\end{aligned}
\right.
$$
\end{itemize}
Moreover, in the case (i) the system~\eqref{eqTJ1} becomes
\begin{equation}\label{eqTJ2i}\left\{\begin{aligned}
\varphi_u&=0,\\
\varphi_v&=1,
\end{aligned}
\right.
\end{equation}
and so $\varphi(u,v)=v+c$, $c\in\r$. In the cases (ii) and (iii), the system~\eqref{eqTJ1} becomes
\begin{equation}\label{eqTJ2}\left\{\begin{aligned}\nonumber
\varphi_u&=-2\tau^{-1}\,B,\\
\varphi_v&=0,
\end{aligned}
\right.
\end{equation}
of which the general solution is given by
\begin{equation}\label{eqphi2}
    \varphi(u,v)=-2\tau^{-1}B\,u+c,
\end{equation}
where $c$ is a real constant.

With respect to the local coordinates $(u,v)$ chosen above, we have the following characterization of the position vector of a helix surface.

\begin{proposition}
Let $M^2$ be a helix surface in $\S\subset\r_2^4$ with constant angle $\vartheta$.  Then, with respect to the local coordinates $(u,v)$ on $M$ defined in \eqref{eq:local-coordinates},  the position vector $F$ of $M^2$ in $\r^4_2$ satisfies the following equation:
\begin{itemize}
\item [(a)] if $B=0$,
\begin{equation}\label{eqquarta1}
\frac{\partial^2F}{\partial u^2}=0,
\end{equation}
\item [(b)] if $B\neq 0$,
\begin{equation}\label{eqquarta}
\frac{\partial^4F}{\partial u^4}+(\tilde{b}^2-2\tilde{a})\,\frac{\partial^2F}{\partial u^2}+\tilde{a}^2\,F=0,
\end{equation}
where
\begin{equation}\label{eq:value-a-b}
\tilde{a}=-\tau^{-2}\sin^2\vartheta\, B\,, \qquad \tilde{b}=-2 \tau^{-1}\, B.
\end{equation}
%and $B=(\tau^2+1)\cos^2\vartheta-1$.
\end{itemize}
\end{proposition}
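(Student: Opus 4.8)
\emph{Plan.} I would work directly with the position vector $F\colon M\to\r^4_2$ and convert the first–order relation $F_u=T$ into a constant–coefficient linear ODE in $u$, exploiting the fact that the ambient fields $X_1,X_2,X_3$ are \emph{linear} in the position.

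First I would record this linearity: writing $F=(v_1,v_2,v_3,v_4)$ with $z=v_1+iv_2$, $w=v_3+iv_4$, one checks from the definitions that $X_1=J_1F$, $X_2=J_2F$, $X_3=J_3F$, hence $E_1=-\tau^{-1}J_1F$, $E_2=J_2F$, $E_3=J_3F$. Combining this with \eqref{eq:def-T} and $\partial_u=T$ gives
$$
F_u=T=L(u)F,\qquad L(u)=-\tau^{-1}\sin^2\vartheta\,J_1-\sin\vartheta\cos\vartheta\,(\cos\varphi\,J_2+\sin\varphi\,J_3),
$$
where $\varphi$ is the function introduced just before \eqref{eq:def-T}. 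The second ingredient is the algebra of $J_1,J_2,J_3$: a short matrix computation shows they pairwise anticommute, with $J_1^2=-I$ and $J_2^2=J_3^2=I$. Since the three coefficients of $L(u)$ multiply pairwise anticommuting matrices, all cross terms cancel in $L(u)^2$, leaving $L(u)^2=(\sin^2\vartheta\cos^2\vartheta-\tau^{-2}\sin^4\vartheta)\,I$, and a direct simplification identifies this scalar as $-\tilde a$, with $\tilde a$ as in \eqref{eq:value-a-b}.

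For case (a), $B=0$: by \eqref{eqTJ2i} the function $\varphi$ does not depend on $u$, so $L$ is a constant matrix and $F_{uu}=L^2F=-\tilde aF=0$, which is \eqref{eqquarta1}. For case (b), $B\neq0$: by \eqref{eqphi2} we have $\varphi=\mu u+c$ with $\mu:=\varphi_u=-2\tau^{-1}B=\tilde b$, so $\varphi$ is affine in $u$. The key observation is that conjugation by $e^{\theta J_1}=\cos\theta\,I+\sin\theta\,J_1$ rotates the pair $(J_2,J_3)$, whence $L(u)=e^{-\frac{\varphi}{2}J_1}M_0\,e^{\frac{\varphi}{2}J_1}$ for the \emph{constant} matrix $M_0=-\tau^{-1}\sin^2\vartheta\,J_1-\sin\vartheta\cos\vartheta\,J_2$. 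Setting $\widetilde F:=e^{\frac{\varphi}{2}J_1}F$ one obtains the constant–coefficient system $\widetilde F_u=N_0\widetilde F$ with $N_0=(\tfrac{\mu}{2}-\tau^{-1}\sin^2\vartheta)J_1-\sin\vartheta\cos\vartheta\,J_2$, and again by the Clifford relations $N_0^2=\rho\,I$ with $\rho=\sin^2\vartheta\cos^2\vartheta-(\tfrac{\mu}{2}-\tau^{-1}\sin^2\vartheta)^2$; hence $\widetilde F_{uu}=\rho\,\widetilde F$. Since $e^{-\frac{\varphi}{2}J_1}$ is a linear combination of $\cos\tfrac{\mu}{2}u$ and $\sin\tfrac{\mu}{2}u$ (with matrix coefficients), each component of $F=e^{-\frac{\varphi}{2}J_1}\widetilde F$ is a superposition of the four modes with exponents $\pm\tfrac{i\mu}{2}\pm\sqrt{\rho}$, so $F$ is annihilated by the constant–coefficient operator whose characteristic polynomial is $\prod_{\epsilon_1,\epsilon_2=\pm1}\bigl(\lambda-\epsilon_1\tfrac{i\mu}{2}-\epsilon_2\sqrt{\rho}\bigr)=\lambda^4+\bigl(\tfrac{\mu^2}{2}-2\rho\bigr)\lambda^2+\bigl(\rho+\tfrac{\mu^2}{4}\bigr)^2$.

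It then remains to match constants. Using $\mu=\tilde b$ together with the identities $\sin^2\vartheta\cos^2\vartheta-\tau^{-2}\sin^4\vartheta=-\tilde a$ and $\mu\,\tau^{-1}\sin^2\vartheta=2\tilde a$, one gets $\rho+\tfrac{\mu^2}{4}=\tilde a$, hence $\tfrac{\mu^2}{2}-2\rho=\tilde b^2-2\tilde a$ and $(\rho+\tfrac{\mu^2}{4})^2=\tilde a^2$, which is exactly \eqref{eqquarta}. The only genuinely delicate point is the bookkeeping of these algebraic identities, together with checking that the argument survives the degenerate subcase $\rho=0$ (where the four modes become $u^{j}\cos\tfrac{\mu}{2}u$, $u^{j}\sin\tfrac{\mu}{2}u$, $j=0,1$, and the same characteristic polynomial reappears with a double root); everything else is routine use of the Clifford relations for $J_1,J_2,J_3$.
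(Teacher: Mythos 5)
Your argument is correct, and it reaches \eqref{eqquarta1} and \eqref{eqquarta} by a genuinely different route than the paper. The paper works componentwise: it writes $F_u=T$ out as the explicit first--order system \eqref{eqprime}, differentiates once using $\varphi_u=0$ (case $B=0$) or $\varphi_u=-2\tau^{-1}B$ (case $B\neq 0$) to get the coupled second--order system \eqref{eqsegunda}, and then differentiates twice more and substitutes back to eliminate the first derivatives. You instead package the same starting point as $F_u=L(u)F$ with $L(u)$ lying in the algebra generated by the three product structures, exploit the Clifford relations ($J_1^2=-I$, $J_2^2=J_3^2=I$, pairwise anticommuting --- all of which check out, with $J_1J_2=-J_3$ giving the rotation of the pair $(J_2,J_3)$ under conjugation by $e^{\theta J_1}$), and gauge away the $u$--dependence of $L$ via $\widetilde F=e^{\frac{\varphi}{2}J_1}F$. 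I verified the bookkeeping: $L^2=-\tilde a\,I$ with $\tilde a$ as in \eqref{eq:value-a-b}, $N_0^2=\rho I$, and $\rho+\mu^2/4=\tilde a$ using $\mu\tau^{-1}\sin^2\vartheta=2\tilde a$, so your characteristic polynomial is exactly $\lambda^4+(\tilde b^2-2\tilde a)\lambda^2+\tilde a^2$; you also correctly note that the only possible root coincidence beyond $\rho=0$ would force $\tilde a=0$, i.e.\ $B=0$, which is excluded in case (b). What your approach buys is conceptual: it explains \emph{why} the equation is fourth order with those particular coefficients (the modes are $\pm i\mu/2\pm\sqrt{\rho}$), and it essentially hands you the explicit general solutions that the paper only derives later, case by case, in the $B>0$ and $B<0$ sections. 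What the paper's brute--force differentiation buys is that it needs no structural input beyond \eqref{eqprime} and produces along the way the intermediate system \eqref{eqsegunda}, which is then reused in Remark~\ref{re-value-fuu} to compute the scalar products of $F$ and its derivatives.
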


\begin{proof}
Let $M^2$ be a helix surface and let $F$ be the position vector of $M^2$ in $\r_2^4$. Then, with respect to the local coordinates $(u,v)$ on $M$ defined in \eqref{eq:local-coordinates}, we can write $F(u,v)=(F_1(u,v),\dots,F_4(u,v))$. By definition, taking into account \eqref{eq:def-T}, we have that
$$\begin{aligned}F_u&=(\partial_uF_1,\partial_uF_2,\partial_uF_3,\partial_uF_4)=T\\
&=\sin\vartheta\,[\sin\vartheta\,{E_1}_{|F(u,v)}-\cos\vartheta\cos\varphi\,{E_2}_{|F(u,v)}-\cos\vartheta\sin\varphi\,{E_3}_{|F(u,v)}]\,.
\end{aligned}$$
Using the expression of $E_1$, $E_2$ and $E_3$ with respect to the coordinates vector fields of $\r^4_2$,  we obtain
\begin{equation}\label{eqprime}\left\{\begin{aligned}
\partial_uF_1&=\sin\vartheta\,(\tau^{-1}\sin\vartheta\,F_2-\cos\vartheta\cos\varphi\,F_4-\cos\vartheta\sin\varphi\,F_3),\\
\partial_uF_2&=-\sin\vartheta\,(\tau^{-1}\sin\vartheta\,F_1+\cos\vartheta\cos\varphi\,F_3-\cos\vartheta\sin\varphi\,F_4),\\
\partial_uF_3&=\sin\vartheta\,(\tau^{-1}\sin\vartheta\,F_4-\cos\vartheta\cos\varphi\,F_2-\cos\vartheta\sin\varphi\,F_1),\\
\partial_uF_4&=-\sin\vartheta\,(\tau^{-1}\sin\vartheta\,F_3+\cos\vartheta\cos\varphi\,F_1-\cos\vartheta\sin\varphi\,F_2).\\
\end{aligned}
\right.
\end{equation}
Therefore, if $B=0$, taking the derivative of \eqref{eqprime} with respect to $u$  and using \eqref{eqTJ2i}, we obtain that $F_{uu}=0$.

If $B\neq 0$, taking the derivative of \eqref{eqprime} with respect to $u$  and using \eqref{eqphi2}, we find two constants $\tilde{a}$ and $\tilde{b}$ such that
\begin{equation}\label{eqsegunda}\left\{\begin{aligned}
(F_1)_{uu}&=\tilde{a}\,F_1+\tilde{b}\,(F_2)_u,\\
(F_2)_{uu}&=\tilde{a}\,F_2-\tilde{b}\,(F_1)_u,\\
(F_3)_{uu}&=\tilde{a}\,F_3+\tilde{b}\,(F_4)_u,\\
(F_4)_{uu}&=\tilde{a}\,F_4-\tilde{b}\,(F_3)_u,\\
\end{aligned}
\right.
\end{equation}
where $$\tilde{a}=\frac{\tau^{-1}\sin^2\vartheta}{2}\varphi_u=-\tau^{-2}\sin^2\vartheta B, \qquad \tilde{b}=\varphi_u.$$ Finally, taking twice the derivative of \eqref{eqsegunda} with respect to $u$ and using  \eqref{eqprime} and \eqref{eqsegunda} in the derivative we obtain the desired equation \eqref{eqquarta}.
\end{proof}

\begin{remark}\label{re-value-fuu}
As $\langle F,F\rangle=1$, using \eqref{eqquarta}, \eqref{eqprime} and \eqref{eqsegunda},  we find that the position vector $F(u,v)$  and its derivatives  must satisfy the relations:
\begin{equation}\label{eq:Fprocuct}
\begin{array}{lll} \langle F,F\rangle=1\,,&\langle F_u,F_u\rangle=\tilde{a},&  \langle F,F_u\rangle=0,\\
\langle F_u,F_{uu}\rangle=0\,,& \langle F_{uu},F_{uu}\rangle=D\,,& \langle F,F_{uu}\rangle=-\tilde{a},\\
\langle F_u,F_{uuu}\rangle=-D \,,&
\langle F_{uu},F_{uuu}\rangle=0\,,&  \langle F,F_{uuu}\rangle=0,\\ \langle F_{uuu},F_{uuu}\rangle=E,\,&\qquad &
\end{array}
\end{equation}
where $$D=\tilde{a}\,\tilde{b}^2-3\tilde{a}^2,\qquad E=(\tilde{b}^2-2\tilde{a})\,D-\tilde{a}^3.$$
In addition, as
$$
J_1F(u,v)={X_1}_{|F(u,v)}=-\tau\,{E_1}_{|F(u,v)}=-\tau\,(F_u+\cos\vartheta\,N),
$$
using \eqref{eqquarta}--\eqref{eq:Fprocuct},  we obtain the following identities
\begin{equation}\label{eq-fu-jf-main}
\begin{aligned}
&\langle J_1F,F_u\rangle=-\tau^{-1}\sin^2\vartheta,\\&\langle J_1 F,F_{uu}\rangle=0\,,\\
&\langle F_u,J_1F_{uu}\rangle=\tilde{a}\,(\tilde{b}-\tau^{-1}\,\sin^2\vartheta):=I\,,\\
&\langle J_1F_u,F_{uuu}\rangle=0\,,\\
&\langle J_1F_u,F_{uu}\rangle+\langle J_1F,F_{uuu}\rangle=0\,,\\
&\langle J_1F_{uu},F_{uuu}\rangle+\langle J_1F_u,F_{uuuu}\rangle=0\,.
\end{aligned}
\end{equation}
 \end{remark}

Using Remark~\ref{re-value-fuu}
we can prove the following proposition that gives the conditions under which an immersion defines a helix surface.

\begin{proposition}\label{pro-viceversa}
Let $F:\Omega\to\S\subset\r_2^4$ be an immersion from an open set $\Omega\subset\r^2$, with local coordinates $(u,v)$,
such that the projection of $E_1=-\tau^{-1}J_1F$ to the tangent space of $F(\Omega)\subset\S$ is $F_u$.
Then $F(\Omega)\subset\S$ defines a helix surface of constant angle $\vartheta$ if and only if
\begin{equation}\label{viceversa1}
g_{\tau}(F_u,F_u)=g_{\tau}(E_1,F_u)=\sin^2\vartheta\,,
\end{equation}
and
\begin{equation}\label{viceversa2}
g_{\tau}(F_v,E_1)-g_{\tau}(F_u,F_v)=0\,.
\end{equation}
\end{proposition}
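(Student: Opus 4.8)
The plan is to prove this as an equivalence, establishing the forward direction first and then the converse. For the forward direction, assume $F(\Omega)$ is a helix surface of constant angle $\vartheta$. Then, by the analysis carried out before the statement of the proposition, we may choose local coordinates $(u,v)$ with $\partial_u=T$, where $T$ is the tangential projection of $E_1$. Since $g_\tau(T,T)=\sin^2\vartheta$ by the definition of $T$ via $E_1=T+\cos\vartheta\,N$, and since $E_1=T+\cos\vartheta\,N$ gives $g_\tau(E_1,T)=g_\tau(T,T)+\cos\vartheta\,g_\tau(N,T)=\sin^2\vartheta$ (as $N\perp T$), equation~\eqref{viceversa1} follows immediately. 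For~\eqref{viceversa2}, write $F_v=a\,T+b\,JT$ as in~\eqref{eq-definition-Fv}; then $g_\tau(F_u,F_v)=g_\tau(T,aT+bJT)=a\sin^2\vartheta$, while $g_\tau(F_v,E_1)=g_\tau(aT+bJT,\,T+\cos\vartheta N)=a\sin^2\vartheta$ since $JT\perp T$ and $JT,T$ are both tangent hence orthogonal to $N$. So the difference vanishes.

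For the converse, suppose $F:\Omega\to\S$ is an immersion with $E_1=-\tau^{-1}J_1F$ having tangential projection $F_u$, and that~\eqref{viceversa1} and~\eqref{viceversa2} hold. Decompose $E_1=F_u+\cos\vartheta\,N$ — wait, more carefully: write the normal component of $E_1$ as $\mu\,N$ for the unit normal $N$, so $E_1=F_u+\mu N$ (using that the tangential part is $F_u$ by hypothesis). Since $g_\tau(E_1,E_1)=1$ and the decomposition is orthogonal, $1=g_\tau(F_u,F_u)+\mu^2$. By~\eqref{viceversa1} this gives $\mu^2=1-\sin^2\vartheta=\cos^2\vartheta$, so $|g_\tau(E_1,N)|=|\mu|=|\cos\vartheta|$ is constant. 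This is exactly the helix condition: the angle between $N$ and the Hopf vector field $E_1$ is the constant $\vartheta$. So in fact~\eqref{viceversa1} alone already forces the constant-angle property, and~\eqref{viceversa2} is the compatibility condition ensuring consistency with the coordinate choice $\partial_u=T$ rather than being needed for the angle to be constant.

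The role of~\eqref{viceversa2} must therefore be to guarantee that the coordinate vector field structure is the right one — specifically, that $F_u$ genuinely equals the tangential part $T$ of $E_1$ in a way compatible with $F$ being an \emph{immersed surface} (so that $\{F_u,F_v\}$ is a genuine frame and $T,JT$ can be defined from it). I would spell this out by noting that once $|g_\tau(E_1,N)|=\cos\vartheta$ is known, $T:=F_u$ is the tangential projection of $E_1$ by hypothesis, and then $F_v=aF_u+b\,(JF_u)$ for some functions $a,b$ with $b\neq 0$ (by linear independence, since $F$ is an immersion); the computation $g_\tau(F_v,E_1)-g_\tau(F_u,F_v)=a\sin^2\vartheta-a\sin^2\vartheta=0$ from the forward direction shows~\eqref{viceversa2} is automatically satisfied and hence imposes no further restriction, or — read the other way — if we had \emph{not} assumed $\partial_u=T$ but only that $F_u$ has some relation to $E_1$, then~\eqref{viceversa2} pins down the decomposition. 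I expect the main subtlety to be bookkeeping: being careful about what is assumed (the tangential projection of $E_1$ is $F_u$) versus what is concluded, and checking that no regularity or orientation issue obstructs defining $N$, $T$, and $JT$; the actual inner-product computations are the short routine ones indicated above, all following from orthogonality of the splitting $TM\oplus\mathbb{R}N$ and from $E_1=T+\cos\vartheta\,N$.
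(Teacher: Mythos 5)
Your argument is correct for the proposition as literally stated, and the forward direction is essentially the paper's (the paper grinds it out through the ambient inner product $\langle\,,\rangle$ of $\r_2^4$ via \eqref{eq-def-gtau} and the table of products in Remark~\ref{re-value-fuu}, whereas you work intrinsically from $E_1=T+\cos\vartheta\,N$; same content, yours is cleaner). The converse is where you genuinely diverge, and where your own closing discussion signals the issue: by invoking the hypothesis that the tangential projection of $E_1$ equals $F_u$, you make \eqref{viceversa2} literally vacuous --- under that hypothesis $g_{\tau}(F_v,E_1)=g_{\tau}(F_v,F_u+cN)=g_{\tau}(F_u,F_v)$ holds for \emph{every} immersion, helix or not --- and you are left speculating about what \eqref{viceversa2} is for. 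The paper's converse answers this by \emph{not} using the projection hypothesis: it sets $T_2=F_v-\frac{g_{\tau}(F_v,F_u)}{g_{\tau}(F_u,F_u)}F_u$, so that $\{F_u,T_2,N\}$ is an orthogonal frame along the surface, uses \eqref{viceversa2} to show $g_{\tau}(E_1,T_2)=0$ (so $E_1=a\,F_u+c\,N$), uses \eqref{viceversa1} to get $a=1$, and only then concludes $c^2=\cos^2\vartheta$ from $g_{\tau}(E_1,E_1)=1$. That stronger formulation --- conditions \eqref{viceversa1}--\eqref{viceversa2} alone imply the helix property, with the projection statement as an output rather than an input --- is what is actually applied in the converse halves of the main theorems, where the explicit parametrizations $F(u,v)=A(v)\gamma(u)$ are only ever checked against \eqref{viceversa1} and \eqref{viceversa2} and the projection hypothesis is never verified separately. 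So your proof is not wrong, but to make the proposition usable downstream you should either prove the converse the paper's way or explicitly verify the projection hypothesis in the applications; as written, your version proves a statement in which one of the two displayed conditions does no work.
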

\begin{proof}
Suppose that $F$ is a helix surface of constant angle $\vartheta$. Then
\begin{eqnarray*}
 g_{\tau}(F_u,F_u)&=&-\langle F_u,F_u\rangle+(1+\tau^2) \langle F_u,J_1F\rangle^2\\
 &=&\tau^{-2}\sin^2\vartheta B+(1+\tau^2)(\tau^{-2}\sin^4\vartheta)\\
 &=&\sin^2\vartheta\,.
\end{eqnarray*}
Similarly
\begin{eqnarray*}
 g_{\tau}(E_1,F_u)&=&\tau^{-1}\langle J_1F,F_u\rangle-\tau^{-1}(1+\tau^2) \langle J_1F,F_u\rangle  \langle J_1F,J_1F\rangle\\
 &=&\tau^{-1}\langle J_1F,F_u\rangle[1-(1+\tau^2)]=\sin^2\vartheta\,.
\end{eqnarray*}
Finally, using \eqref{eq-definition-Fv}, we have
\begin{eqnarray*}
 g_{\tau}(F_v,E_1)-g_{\tau}(F_u,F_v)&=&-\frac{a}{\tau} g_{\tau}(F_u,J_1F)-\frac{b}{\tau} g_{\tau}(J_1F_u,J_1F)-a g_{\tau}(F_u,F_u)
 -b g_{\tau}(J_1F_u,F_u)\\
 &=& a \sin^2\vartheta - 0  - a \sin^2\vartheta-0=0 \,.
\end{eqnarray*}
For the converse, put
$$
T_2=F_v-\frac{g_{\tau}(F_v,F_u)F_u}{g_{\tau}(F_u,F_u)}\,.
$$
Then, if we denote by $N$ the unit normal vector field to the surface $F(\Omega)$, $\{F_u,T_2,N\}$ is
an orthogonal bases of the tangent space of $\S$ along the surface $F(\Omega)$.
Now, using \eqref{viceversa2}, we get $g_{\tau}(E_1,T_2)=0$, thus $E_1=a\, F_u+c\, N$. Moreover,
using \eqref{viceversa1} and that $g_{\tau}(E_1,F_u)=a\, g_{\tau}(F_u,F_u)$, we conclude that $a=1$.
Finally,
$$
1=g_{\tau}(E_1,E_1)=g_{\tau}(F_u+c\, N,F_u+c\, N)=\sin^2\vartheta+c^2\,,
$$
which implies that $c^2=\cos^2\vartheta$.
Thus the angle between $E_1$ and $N$ is
$$
g_{\tau}(E_1,N)=g_{\tau}(F_u+\cos\vartheta N,N)=\cos\vartheta.
$$
\end{proof}

\section{The case $B=0$}
\begin{theorem}
Let $M^2$ be a helix surface in the $\S\subset\r_2^4$ with constant angle $\vartheta$ such that $B=0$. Then $\cos\vartheta=\frac{1}{\sqrt{1+\tau^2}}$ and, locally,
the position vector of $M^2$ in $\r^4_2$, with respect to the local coordinates $(u,v)$ on $M$ defined in \eqref{eq:local-coordinates}, is given by
\begin{equation}\label{eq-def-F-B-0}
F(u,v)=A(v)\Big(1,-\frac{\tau\,u}{1+\tau^2},\frac{\tau\,u}{1+\tau^2},0\Big)\,,
\end{equation}
where $A(v)=A(\xi,\xi_1,\xi_2,\xi_3)(v)$ is a 1-parameter family of $4\times 4$ indefinite orthogonal matrices commuting with $J_1$, as described in \eqref{eq-descrizione-A}, with
\begin{equation}\begin{aligned}\label{eq-alpha123-b-0}
 &[\xi'(v)+\xi_2'(v)+\xi_3'(v)]\,\sin(\xi_2(v)-\xi_3(v))\,\sinh(2\xi_1(v))-\\&2(\xi'(v)-\xi_3'(v))\,\sinh ^2\xi_1(v)+2\,[\xi_1'(v)\,\cos(\xi_2(v)-\xi_3(v))+\xi_2'(v)\,\cosh^2\xi_1(v)]=0.
  \end{aligned}
\end{equation}
Conversely, a parametrization
$$
F(u,v)=A(v)\Big(1,-\frac{\tau\,u}{1+\tau^2},\frac{\tau\,u}{1+\tau^2},0\Big)\,,
$$
with $A(v)$ as above, defines a helix surface in the special linear group with constant angle $\vartheta=\arccos\frac{1}{\sqrt{1+\tau^2}}$.
\end{theorem}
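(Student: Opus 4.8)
The plan is to treat both implications simultaneously by reducing everything to the immersion criterion of Proposition~\ref{pro-viceversa}: for the parametrization in the statement, condition \eqref{viceversa1} will hold automatically, and condition \eqref{viceversa2} will turn out to be equivalent to \eqref{eq-alpha123-b-0}.

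For the necessity part I would first note that $B=0$ means $(\tau^2+1)\cos^2\vartheta=1$, so $\cos\vartheta=1/\sqrt{1+\tau^2}$ (the positive root, since $\vartheta\in(0,\pi/2)$), and hence $\sin^2\vartheta=\tau^2/(1+\tau^2)$, $\tau^{-1}\sin^2\vartheta=\tau/(1+\tau^2)=:w$. By \eqref{eqquarta1} the position vector is affine in $u$, $F(u,v)=p(v)+u\,q(v)$ with $p(v):=F(0,v)$ and $q(v):=F_u(0,v)$; specializing Remark~\ref{re-value-fuu} to $\tilde a=\tilde b=0$ (equivalently, using $\langle F,F\rangle=1$ together with $g_\tau(F_u,F_u)=g_\tau(E_1,F_u)=\sin^2\vartheta$) then gives $\langle p,p\rangle=1$, $\langle p,q\rangle=0$, $\langle q,q\rangle=0$, $\langle J_1p,q\rangle=-w$. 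Thus $\{p,J_1p\}$ is a $J_1$-invariant orthonormal spacelike pair, and writing $q=-w\,J_1p+q^{W}$ with $q^{W}$ in the ($J_1$-invariant, timelike) orthogonal plane, the relation $\langle q,q\rangle=0$ forces $\langle q^{W},q^{W}\rangle=-w^2$. Consequently the pair $(p(v),q(v))$ is obtained from the model pair $\bigl((1,0,0,0),(0,-w,w,0)\bigr)$ by a unique smooth $1$-parameter family $A(v)\in\mathrm O_2(4)$ commuting with $J_1$ (it is forced on the whole standard basis by $A(v)e_1=p$, $A(v)e_2=J_1p$, $A(v)e_3=w^{-1}q^{W}$, $A(v)e_4=w^{-1}J_1q^{W}$, and one checks these span a pseudo-orthonormal basis of the right signature); by the description \eqref{eq-descrizione-A} this family is encoded by four functions $\xi,\xi_1,\xi_2,\xi_3$, and $A(v)\,(1,-wu,wu,0)=p(v)+u\,q(v)=F(u,v)$, which is the asserted form (all of this being local, away from the coordinate singularity $\xi_1=0$).

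Next I would derive \eqref{eq-alpha123-b-0}, the common core of both directions. For any $F(u,v)=A(v)\gamma(u)$ with $\gamma(u)=(1,-wu,wu,0)$ and $A(v)\in\mathrm O_2(4)$ commuting with $J_1$: $F$ takes values in $\S$ because $A(v)$ preserves $\langle\cdot,\cdot\rangle$ and $\langle\gamma,\gamma\rangle=1$; and, expanding $g_\tau$ by \eqref{eq-def-gtau} and using $J_1A(v)=A(v)J_1$, $X_1=J_1F$, $\langle\gamma',\gamma'\rangle=0$, $\langle\gamma',J_1\gamma\rangle=-w$, a short computation gives $g_\tau(F_u,F_u)=g_\tau(E_1,F_u)=\sin^2\vartheta$, i.e. \eqref{viceversa1} holds automatically. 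Using moreover $(1+\tau^2)w=\tau$ and that $\langle F_u,J_1F\rangle=-w$ is constant in $u$, the left-hand side of \eqref{viceversa2} collapses to $\langle F_u,F_v\rangle$; and since $F_u=A(v)\gamma'$, $F_v=A'(v)\gamma$, with $A(v)^{-1}A'(v)$ in the Lie algebra of $\mathrm O_2(4)$ (so $\langle A^{-1}A'x,x\rangle\equiv 0$) and $\gamma=e_1+u\gamma'$, one gets $\langle F_u,F_v\rangle=\langle\gamma',A^{-1}A'\gamma\rangle=\langle\gamma',A^{-1}A'e_1\rangle=-w\bigl((A^{-1}A'e_1)_2+(A^{-1}A'e_1)_3\bigr)$, independent of $u$. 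Hence \eqref{viceversa2} is equivalent to $(A^{-1}A'e_1)_2+(A^{-1}A'e_1)_3=0$; computing $A^{-1}A'e_1=A^{-1}\tfrac{d}{dv}(Ae_1)$ via $A^{-1}=\epsilon A^{t}\epsilon$ and the explicit rows of \eqref{eq-descrizione-A} turns this scalar identity, after simplification, into \eqref{eq-alpha123-b-0}. In the necessity part $F$ is a genuine helix surface with $\partial_u=T$, so \eqref{viceversa2} holds by Proposition~\ref{pro-viceversa} and \eqref{eq-alpha123-b-0} follows; conversely, if $A(\xi,\xi_1,\xi_2,\xi_3)(v)$ satisfies \eqref{eq-alpha123-b-0}, the same computations give \eqref{viceversa1}--\eqref{viceversa2}, which (since $g_\tau$ is Riemannian) force the $g_\tau$-orthogonal projection of $E_1=-\tau^{-1}J_1F$ onto the tangent plane to equal $F_u$, so Proposition~\ref{pro-viceversa} applies and $F$ (wherever $F_u,F_v$ are independent, i.e. where it is an immersion) defines a helix surface with constant angle $\arccos(1/\sqrt{1+\tau^2})$.

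I expect the main obstacle to be precisely that last computation: writing the column $A^{-1}A'e_1$ in terms of $\xi,\xi_1,\xi_2,\xi_3$ and their derivatives for the four-function family \eqref{eq-descrizione-A}, and checking that $(A^{-1}A'e_1)_2+(A^{-1}A'e_1)_3=0$ reduces exactly to \eqref{eq-alpha123-b-0}. It is routine but lengthy (an identity involving products of trigonometric and hyperbolic functions), and one must be careful with signs and with the row/column conventions in \eqref{eq-descrizione-A}. Two minor additional points are the smooth, local choice of the parametrizing functions $\xi_i,\xi$ in the necessity part, and the genericity of the immersion condition in the sufficiency part.
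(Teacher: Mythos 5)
Your proposal is correct and follows essentially the same route as the paper: integrate $F_{uu}=0$ to get $F=p+uq$, use the inner-product relations from Remark~\ref{re-value-fuu} to build a $J_1$-commuting family $A(v)\in\mathrm{O}_2(4)$ carrying the model pair to $(p,q)$, reduce \eqref{viceversa2} to $\langle F_u,F_v\rangle=0$ (hence to \eqref{eq-alpha123-b-0}), and settle the converse via Proposition~\ref{pro-viceversa}. The only (cosmetic) difference is that you obtain the decomposition $q=-wJ_1p+q^W$ directly from the scalar-product relations, whereas the paper reads it off from the first-order system \eqref{eqprime}; like the paper, you leave the final trigonometric--hyperbolic verification of \eqref{eq-alpha123-b-0} as a routine computation.
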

\begin{proof}
Since $B=0$ we have immediately that $\cos^2\vartheta=1/({1+\tau^2})$.
Integrating \eqref{eqquarta1}, we obtain that
\begin{equation}\label{eq-Fh1h2}
F(u,v)=h^1(v)+u\,h^2(v),
\end{equation}
where $h^i(v)$, $i=1,2$, are vector fields  in $\r^4_2$, depending only on $v$.

Evaluating in $(0,v)$ the identities:
$$
\begin{aligned}
&\langle F,F\rangle=1,\quad\langle F_u,F_u\rangle=0,\\
&\langle F,F_u\rangle=0,\quad \langle J_1F,F_u\rangle=-\tau^{-1}\,\sin^2\vartheta=-\frac{\tau}{1+\tau^2},
\end{aligned}
$$
it results that
\begin{equation}\label{eqh}
\begin{aligned}&\langle h^1(v),h^1(v)\rangle=1,\quad\langle h^1(v),h^2(v)\rangle=0,\\
&\langle h^2(v),h^2(v)\rangle=0,\quad \langle J_1h^1(v),h^2(v)\rangle=-\frac{\tau}{1+\tau^2}.
\end{aligned}
\end{equation}
Moreover, using \eqref{eqprime} in $(0,v)$, we have that
$$
h^2(v)=-\frac{\tau}{1+\tau^2}\,(J_1h^1(v)-h^3(v)),
$$
where $h^3(v)$ is a vector field of $\r_2^4$ satisfying
\begin{equation}\label{eqh1}
\langle h^3(v),h^3(v)\rangle=-1,\quad \langle h^1(v),h^3(v)\rangle=0,\quad \langle J_1h^1(v),h^3(v)\rangle=0\,.
\end{equation}
Consequently, if we fix the orthonormal basis $\{\hat{E}_i\}_{i=1}^4$ of $\r^4_2$ given by
$$
\hat{E}_1=(1,0,0,0)\,,\quad \hat{E}_2=(0,1,0,0)\,,\quad \hat{E}_3=(0,0,1,0)\,,\quad \hat{E}_4=(0,0,0,1)\,,
$$
there must exists a 1-parameter family of matrices $A(v)\in \mathrm{O}_2(4)$, with $J_1\,A(v)=A(v)\,J_1$, such that
$$
h^1(v)=A(v)\hat{E}_1,\quad J_1h^1(v)=A(v)\hat{E}_2,\quad h^3(v)=A(v)\hat{E}_3,\quad J_1h^3(v)=A(v)\hat{E}_4.
$$
Then \eqref{eq-Fh1h2} becomes
$$
F(u,v)=h^1(v)-\frac{\tau\,u}{1+\tau^2}\,(J_1h^1(v)-h^3(v))=A(v)\Big(1,-\frac{\tau\,u}{1+\tau^2},\frac{\tau\,u}{1+\tau^2},0\Big)\,.
$$
Finally, the $1$-parameter family $A(v)$ depends, according to \eqref{eq-descrizione-A},
on four functions $\xi_1(v)$, $\xi_2(v)$, $\xi_3(v)$ and $\xi(v)$ and, in this case, condition \eqref{viceversa2} reduces to
$\langle F_u, F_v\rangle=0$ which is equivalent to \eqref{eq-alpha123-b-0}.

For the converse, let
$$
F(u,v)=A(v)\Big(1,-\frac{\tau\,u}{1+\tau^2},\frac{\tau\,u}{1+\tau^2},0\Big)\,,
$$
be a parametrization where $A(v)=A(\xi(v),\xi_1(v),\xi_2(v),\xi_3(v))$ is a $1$-parameter family of indefinite orthogonal matrices
with functions  $\xi(v),\xi_1(v),\xi_2(v),\xi_3(v)$ satisfying \eqref{eq-alpha123-b-0}. Since $A(v)$ satisfies \eqref{eq-alpha123-b-0}, then $F$ satisfies
\eqref{viceversa2}, thus, in virtue of Proposition~\ref{pro-viceversa}, we only have to show that \eqref{viceversa1} is satisfied for some constant angle $\vartheta$.
For this we put
$$
\gamma(u)=\Big(1,-\frac{\tau\,u}{1+\tau^2},\frac{\tau\,u}{1+\tau^2},0\Big)\,.
$$
Now, using \eqref{eq-def-gtau} and taking into account that $A(v)$ commutes with $J_1$, we get
\begin{eqnarray*}
g_{\tau}(F_u,F_u)&=&-\langle A(v)\gamma'(u),A(v)\gamma'(u)\rangle+(1+\tau^2)\langle A(v)\gamma'(u),J_1A(v)\gamma(u)\rangle^2\\
&=& (1+\tau^2)\langle\gamma'(u),J_1\gamma(u)\rangle^2=\frac{\tau^2}{1+\tau^2}\,,
\end{eqnarray*}
and we can choose $\vartheta$ such that $\tau^2/(1+\tau^2)=\sin^2\vartheta$.
Similarly,
\begin{eqnarray*}
g_{\tau}(E_1,F_u)&=&-\langle E_1,F_u\rangle+(1+\tau^2)\langle E_1,J_1F\rangle \langle F_u,J_1F\rangle\\
&=& \frac{\langle  F_u,J_1 F\rangle}{\tau}\left[1-(1+\tau^2)\right]\\
&=&(-\tau)\langle \gamma'(u),J_1\gamma(u)\rangle=\frac{\tau^2}{1+\tau^2}.
\end{eqnarray*}
\end{proof}

\begin{example}
If we take $\xi_2=\xi_3=\cst$, \eqref{eq-alpha123-b-0} becomes $\xi'(1-\sinh^2\xi_1)=0$. Thus, if also $\xi=\cst$, we
find, from \eqref{eq-descrizione-A}, a 1-parameter family $A(v)=A(\xi_1(v))$ of indefinite orthogonal matrices such that \eqref{eq-def-F-B-0}
defines a helix surface for any function $\xi_1$.
\end{example}

\section{The case $B>0$}
 Supposing $B>0$, integrating \eqref{eqquarta} we have the following
\begin{proposition}\label{prop-Fuv}
Let $M^2$ be a helix surface in $\S$ with constant angle $\vartheta$ so that $B>0$. Then, with respect to the local coordinates $(u,v)$ on $M$ defined in \eqref{eq:local-coordinates},  the position vector $F$ of $M^2$ in $\r^4_2$
is given by
$$
F(u,v)=\cos(\alpha_1\, u)\,g^1(v)+\sin(\alpha_1\, u)\,g^2(v)+\cos(\alpha_2\, u)\,g^3(v)+\sin(\alpha_2\, u)\,g^4(v),
$$
where
$$
\alpha_{1,2}=\frac{1}{\tau}(\tau \sqrt{B} \cos\vartheta\pm B)
$$
are positive real constants, while the $g^i(v)$, $i\in\{1,\dots,4\}$, are mutually orthogonal vector fields  in $\r^4_2$, depending only on $v$, such that
\begin{equation}\begin{aligned}\label{g1122}
g_{11}&=\langle g^1(v),g^1(v)\rangle=g_{22}=\langle g^2(v),g^2(v)\rangle=-\frac{\tau}{2B}\, \alpha_2\,,\\
g_{33}&=\langle g^3(v),g^3(v)\rangle=g_{44}=\langle g^4(v),g^4(v)\rangle=\frac{\tau}{2B}\, \alpha_1\,.
\end{aligned}\end{equation}
\end{proposition}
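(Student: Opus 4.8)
The plan is to integrate the linear fourth‑order ODE \eqref{eqquarta} and then to fix the geometry of the coefficient vector fields by means of the algebraic identities collected in Remark~\ref{re-value-fuu}.

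First I would analyse the characteristic polynomial of \eqref{eqquarta}, namely $r^{4}+(\tilde b^{2}-2\tilde a)\,r^{2}+\tilde a^{2}=0$. Setting $s=r^{2}$ gives $s^{2}+(\tilde b^{2}-2\tilde a)\,s+\tilde a^{2}=0$, whose discriminant is $\tilde b^{2}(\tilde b^{2}-4\tilde a)$. Using $\tilde a=-\tau^{-2}\sin^{2}\vartheta\,B$, $\tilde b=-2\tau^{-1}B$ together with the defining identity $B=(\tau^{2}+1)\cos^{2}\vartheta-1$, one checks the simplifications $\tilde b^{2}-4\tilde a=4B\cos^{2}\vartheta$ and $\tau^{2}\cos^{2}\vartheta-B=\sin^{2}\vartheta$; since $B>0$ these force the two roots in $s$ to be real, negative and distinct, equal to $-\alpha_{1}^{2}$ and $-\alpha_{2}^{2}$ with $\alpha_{1,2}=\tau^{-1}(\tau\sqrt{B}\cos\vartheta\pm B)$. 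Indeed a direct check gives $\alpha_{1}^{2}+\alpha_{2}^{2}=\tilde b^{2}-2\tilde a$ and $\alpha_{1}^{2}\alpha_{2}^{2}=\tilde a^{2}$, while $\alpha_{1}-\alpha_{2}=2B/\tau>0$ and $\alpha_{2}>0$ because $\tau^{2}\cos^{2}\vartheta-B=\sin^{2}\vartheta>0$ (here $\vartheta\neq0$ is used). Hence the roots of the characteristic equation are the four distinct purely imaginary numbers $\pm i\alpha_{1},\pm i\alpha_{2}$, and since \eqref{eqquarta} is a constant‑coefficient ODE satisfied componentwise by $F$, its general solution in $u$ is
$$
F(u,v)=\cos(\alpha_{1}u)\,g^{1}(v)+\sin(\alpha_{1}u)\,g^{2}(v)+\cos(\alpha_{2}u)\,g^{3}(v)+\sin(\alpha_{2}u)\,g^{4}(v),
$$
where the $g^{i}$ are $\r^{4}_{2}$-valued functions of the integration parameter $v$.

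Next I would pin down the inner products $g_{ij}=\langle g^{i},g^{j}\rangle$ by substituting this expression (and $F_{u}$) into the relations of Remark~\ref{re-value-fuu}, specifically $\langle F,F\rangle=1$ and $\langle F_{u},F_{u}\rangle=\tilde a$. Expanding with the product‑to‑sum formulas, the left‑hand sides become trigonometric polynomials in $u$ whose frequencies lie in $\{0,\,2\alpha_{1},\,2\alpha_{2},\,\alpha_{1}+\alpha_{2},\,\alpha_{1}-\alpha_{2}\}$; since $B>0$ all these are non‑zero and, apart from the single exceptional value of $\vartheta$ at which $\alpha_{1}=3\alpha_{2}$ (dealt with by combining the two identities), pairwise distinct. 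Equating to zero the coefficient of each non‑constant function forces $g_{ij}=0$ for $i\neq j$ and $g_{11}=g_{22}$, $g_{33}=g_{44}$, i.e. the $g^{i}$ are mutually orthogonal; the two constant terms then give the linear system $g_{11}+g_{33}=1$, $\alpha_{1}^{2}g_{11}+\alpha_{2}^{2}g_{33}=\tilde a$. Since $\alpha_{1}^{2}\alpha_{2}^{2}=\tilde a^{2}$ with $\tilde a<0$ and $\alpha_{1},\alpha_{2}>0$ we have $\tilde a=-\alpha_{1}\alpha_{2}$, so solving the system and using $\alpha_{1}-\alpha_{2}=2B/\tau$ yields $g_{11}=g_{22}=-\tfrac{\tau}{2B}\alpha_{2}$ and $g_{33}=g_{44}=\tfrac{\tau}{2B}\alpha_{1}$, which is \eqref{g1122}.

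The expansion and bookkeeping of the trigonometric coefficients is routine; the only genuine subtlety is the algebraic simplification identifying the exponents $\alpha_{1},\alpha_{2}$ in closed form — it rests entirely on the two identities $\tilde b^{2}-4\tilde a=4B\cos^{2}\vartheta$ and $\tau^{2}\cos^{2}\vartheta-B=\sin^{2}\vartheta$ — together with verifying that the frequencies occurring in the coefficient comparison are independent, so that the resulting linear system is non‑degenerate. I expect this independence check, and the handling of the borderline case $\alpha_{1}=3\alpha_{2}$ (where one must invoke a further relation from Remark~\ref{re-value-fuu}, e.g.\ $\langle F_{uu},F_{uu}\rangle=D$), to be the main point requiring care; the $J_{1}$-relations of \eqref{eq-fu-jf-main} are not needed at this stage and will instead govern the $v$-dependence in the subsequent classification.
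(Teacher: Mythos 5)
Your proposal is correct. The first half (integrating the constant--coefficient equation \eqref{eqquarta}, checking via $\tilde b^{2}-4\tilde a=4B\cos^{2}\vartheta$ and $\tau^{2}\cos^{2}\vartheta-B=\sin^{2}\vartheta$ that the characteristic roots are the four distinct purely imaginary numbers $\pm i\alpha_{1},\pm i\alpha_{2}$ with $\alpha_{1,2}>0$) coincides with the paper's argument, and is in fact more careful, since the paper does not explicitly verify positivity or distinctness of the $\alpha_{i}$. Where you diverge is in how the Gram matrix of the $g^{i}$ is extracted: the paper evaluates all ten relations of \eqref{eq:Fprocuct} at $u=0$, obtaining a linear system in the $g_{ij}$ involving derivatives of $F$ up to order three, whereas you use only $\langle F,F\rangle=1$ and $\langle F_{u},F_{u}\rangle=\tilde a$ but as identities in $u$, comparing Fourier coefficients at the frequencies $2\alpha_{1},2\alpha_{2},\alpha_{1}\pm\alpha_{2}$. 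Your route is more economical (two identities instead of ten) at the cost of the non--degeneracy discussion: the frequencies can collide only when $\alpha_{1}=3\alpha_{2}$, and your claim that this case is resolved by combining the coefficient equations of the two identities does check out (the resulting $2\times2$ systems in $(g_{33}-g_{44},\,g_{13}+g_{24})$ and in $(g_{34},\,g_{23}-g_{14})$ have determinant $\alpha_{2}(\alpha_{1}+\alpha_{2})\neq0$, so one does not even need $\langle F_{uu},F_{uu}\rangle=D$). The final linear system $g_{11}+g_{33}=1$, $\alpha_{1}^{2}g_{11}+\alpha_{2}^{2}g_{33}=\tilde a=-\alpha_{1}\alpha_{2}$, solved with $\alpha_{1}-\alpha_{2}=2B/\tau$, gives \eqref{g1122} exactly as stated. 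The point--evaluation route of the paper sidesteps the frequency--collision issue but implicitly relies on the invertibility of its own larger linear system; both arguments are sound.
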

\begin{proof}
First, a direct integration of \eqref{eqquarta}, gives the solution
$$
F(u,v)=\cos(\alpha_1 u)\,g^1(v)+\sin(\alpha_1 u)\,g^2(v)+\cos(\alpha_2 u)\,g^3(v)+\sin(\alpha_2 u)\,g^4(v)\,,
$$
where
$$
\alpha_{1,2}=\sqrt{\frac{\tilde{b}^2-2\tilde{a}\pm\sqrt{\tilde{b}^4-4\tilde{a}\tilde{b}^2}}{2}}
$$
are two constants, while the $g^i(v)$, $i\in\{1,\dots,4\}$,  are vector fields in $\r^4_2$ which depend only on $v$. Now, taking into account the values of $\tilde{a}$ and $\tilde{b}$ given in \eqref{eq:value-a-b},  we get
\begin{equation}\label{alpha12}
\alpha_{1,2}=\frac{1}{\tau}(\tau \sqrt{B} \cos\vartheta\pm B)\,.
\end{equation}

Putting $g_{ij}(v)=\langle g^i(v),g^j(v)\rangle$, and evaluating  the relations \eqref{eq:Fprocuct}  in $(0,v)$, we obtain:
\begin{equation}\label{uno}
    g_{11}+g_{33}+2g_{13}=1,
\end{equation}
\begin{equation}\label{due}
    \alpha_1^2\,g_{22}+\alpha_2^2\,g_{44}+2\alpha_1\alpha_2\,g_{24}=\tilde{a},
\end{equation}
\begin{equation}\label{tre}
    \alpha_1\,g_{12}+\alpha_2\,g_{14}+\alpha_1\,g_{23}+\alpha_2\,g_{34}=0,
\end{equation}
\begin{equation}\label{quatro}
    \alpha_1^3\,g_{12}+\alpha_1\alpha_2^2\,g_{23}+\alpha_1^2\alpha_2\,g_{14}+\alpha_2^3g_{34}=0,
\end{equation}
\begin{equation}\label{cinque}
    \alpha_1^4\,g_{11}+\alpha_2^4\,g_{33}+2\alpha_1^2\alpha_2^2\,g_{13}=D,
\end{equation}
\begin{equation}\label{sei}
    \alpha_1^2\,g_{11}+\alpha_2^2\,g_{33}+(\alpha_1^2+\alpha_2^2)\,g_{13}=\tilde{a},
\end{equation}
\begin{equation}\label{sette}
    \alpha_1^4\,g_{22}+\alpha_1^3\alpha_2\,g_{24}+\alpha_1\alpha_2^3\,g_{24}+\alpha_2^4\,g_{44}=D,
\end{equation}
\begin{equation}\label{otto}
    \alpha_1^5\,g_{12}+\alpha_1^3\alpha_2^2\,g_{23}+\alpha_1^2\alpha_2^3\,g_{14}+\alpha_2^5\,g_{34}=0,
\end{equation}
\begin{equation}\label{nove}
    \alpha_1^3\,g_{12}+\alpha_1^3\,g_{23}+\alpha_2^3\,g_{14}+\alpha_2^3\,g_{34}=0,
\end{equation}
\begin{equation}\label{dieci}
    \alpha_1^6\,g_{22}+\alpha_2^6\,g_{44}+2\alpha_1^3\alpha_2^3\,g_{24}=E.
\end{equation}

From \eqref{tre}, \eqref{quatro}, \eqref{otto}, \eqref{nove}, it follows that $$g_{12}=g_{14}=g_{23}=g_{34}=0.$$
Also, from  \eqref{uno}, \eqref{cinque} and \eqref{sei}, we obtain
$$g_{11}=\frac{\tau^2\,(D+\alpha_2^4)+2B\sin^2\vartheta\,\alpha_2^2}{\tau^2(\alpha_1^2-\alpha_2^2)^2},\qquad g_{13}=0,\qquad
g_{33}=\frac{\tau^2\,(D+\alpha_1^4)+2B\sin^2\vartheta\,\alpha_1^2}{\tau^2(\alpha_1^2-\alpha_2^2)^2}.$$
Finally, using  \eqref{due}, \eqref{sette} and \eqref{dieci}, we obtain
$$g_{22}=\frac{\tau^2\,(E-2D\,\alpha_2^2)-B\sin^2\vartheta\,\alpha_2^4}{\tau^2\alpha_1^2\,(\alpha_1^2-\alpha_2^2)^2},\quad g_{24}=0,\qquad
g_{44}=\frac{\tau^2\,(E-2D\,\alpha_1^2)-B\sin^2\vartheta\,\alpha_1^4}{\tau^2\alpha_2^2\,(\alpha_1^2-\alpha_2^2)^2}.$$
We observe that $$g_{11}=g_{22}=\frac{\sqrt{B}-\tau\,\cos\vartheta}{2\sqrt{B}}<0,\qquad g_{33}=g_{44}=\frac{\sqrt{B}+\tau\,\cos\vartheta}{2\sqrt{B}}>0.$$
Therefore, taking into account \eqref{alpha12}, we obtain the expressions \eqref{g1122}.
\end{proof}

We are now in the right position to state the main result of this section.
\begin{theorem}\label{teo-principal1}
Let $M^2$ be a helix surface in the $\S\subset\r_2^4$ with constant angle $\vartheta\neq\pi/2$ so that $B>0$. Then, locally,
the position vector of $M^2$ in $\r^4_2$, with respect to the local coordinates $(u,v)$ on $M$ defined in \eqref{eq:local-coordinates}, is
\begin{equation}\label{eq-def-F-B-mag-0}
F(u,v)=A(v)\,\gamma(u)\,,
\end{equation}
where
\begin{equation}\label{eq-gamma-main-theorem}
\gamma(u)=(\sqrt{g_{33}}\,\cos (\alpha_2\, u),-\sqrt{g_{33}}\,\sin (\alpha_2\, u),\sqrt{-g_{11}}\,\cos (\alpha_1\, u),\sqrt{-g_{11}}\,\sin (\alpha_1\, u))
\end{equation}
is a curve in $\S$, $g_{11}$, $g_{33}$, $\alpha_1$, $\alpha_2$ are the four constants given in Proposition~\ref{prop-Fuv}, and $A(v)=A(\xi,\xi_1,\xi_2,\xi_3)(v)$ is a 1-parameter family of $4\times 4$ indefinite orthogonal matrices commuting with $J_1$, as described in \eqref{eq-descrizione-A}, with $\xi=\cst$ and
\begin{equation}\label{eq-alpha123}
  \cosh^2(\xi_1(v)) \,\xi_{2}'(v)+\sinh^2(\xi_1(v))\, \xi_{3}'(v)=0\,.
\end{equation}
Conversely, a parametrization $F(u,v)=A(v)\,\gamma(u)$, with $\gamma(u)$ and $A(v)$ as above, defines
 a constant angle surface in $\S$ with constant angle $\vartheta\neq\pi/2$.
\end{theorem}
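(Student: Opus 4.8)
The plan is to upgrade the conclusion of Proposition~\ref{prop-Fuv}, which already fixes the $u$--dependence of the position vector, to a factorization $F(u,v)=A(v)\,\gamma(u)$, to recognize $A(v)$ as a member of the family \eqref{eq-descrizione-A}, and to translate the helix condition, through Proposition~\ref{pro-viceversa}, into the single equation \eqref{eq-alpha123}. For the direct implication I would start from
$$
F(u,v)=\cos(\alpha_1 u)\,g^1(v)+\sin(\alpha_1 u)\,g^2(v)+\cos(\alpha_2 u)\,g^3(v)+\sin(\alpha_2 u)\,g^4(v)
$$
as in Proposition~\ref{prop-Fuv}, with the $g^i(v)$ mutually orthogonal and of the norms in \eqref{g1122}, and first establish
$$
g^2=J_1 g^1\,,\qquad g^4=-J_1 g^3\,.
$$
These do not follow from \eqref{eqquarta} alone but from the pairwise coupling in \eqref{eqsegunda}: writing $z=F_1+iF_2$, $w=F_3+iF_4$, that system says $z,w$ each solve $\zeta_{uu}+i\tilde{b}\,\zeta_u-\tilde{a}\,\zeta=0$, whose solutions are spanned by $e^{i\alpha_1 u}$ and $e^{-i\alpha_2 u}$; under $\r^4_2\cong\c^2$, $(v_1,v_2,v_3,v_4)\mapsto(v_1+iv_2,v_3+iv_4)$, in which $J_1$ acts as multiplication by $i$, the coefficients $g^1,g^2$ of $\cos(\alpha_1 u),\sin(\alpha_1 u)$ correspond to a pair $(p,r)\in\c^2$ and to $(ip,ir)$, and $g^3,g^4$ to $(q,s)$ and to $(-iq,-is)$, which gives the two relations. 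Setting $\hat g^i:=g^i/\sqrt{|g_{ii}|}$, the ordered frame $\{\hat g^3,-\hat g^4,\hat g^1,\hat g^2\}=\{\hat g^3,J_1\hat g^3,\hat g^1,J_1\hat g^1\}$ is, by \eqref{g1122} and orthogonality, a pseudo--orthonormal basis of $\r^4_2$ of signature type $(+,+,-,-)$; taking $A(v)\in\mathrm{O}_2(4)$ with these columns, the relations above say exactly $A(v)J_1=J_1A(v)$, and one checks that $F(u,v)=A(v)\,\gamma(u)$ with $\gamma$ as in \eqref{eq-gamma-main-theorem}.

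Since $A(v)$ commutes with $J_1$, it is described by \eqref{eq-descrizione-A} through functions $\xi_1,\xi_2,\xi_3,\xi$; to see that $\xi$ is constant I would use the first order relation $F_u=T$. The ambient fields $X_1,X_2,X_3$ equal $J_1F,J_2F,J_3F$ at the point $F$, so, substituting $\varphi(u,v)=-2\tau^{-1}B\,u+c$ from \eqref{eqphi2} — with $c$ a genuine constant independent of $v$, and using $\alpha_1-\alpha_2=2\tau^{-1}B$ — equation \eqref{eqprime} can be reorganized, after factoring out the one--parameter groups $e^{\pm\alpha_i uJ_1}$ and using the commutation and anticommutation rules among $J_1,J_2,J_3$, into algebraic identities relating $g^1,g^2$ (hence the whole frame) to $g^3$ and the constant $c$ only; comparison with \eqref{eq-descrizione-A} then forces the twist parameter to equal a constant value determined by $c$, so $\xi=\cst$. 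Finally the conditions \eqref{viceversa1} of Proposition~\ref{pro-viceversa} hold automatically, since they depend only on the facts that $A$ preserves $\langle\cdot,\cdot\rangle$ and commutes with $J_1$, together with the explicit form of $\gamma$; while \eqref{viceversa2}, written with $F_v=A'(v)\gamma(u)$ and expanded in the Fourier modes of $u$, reduces to \eqref{eq-alpha123}.

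For the converse I would verify the hypotheses of Proposition~\ref{pro-viceversa} for $F(u,v)=A(v)\gamma(u)$ with $\gamma,A$ as stated: $F$ is an immersion, and $E_1=-\tau^{-1}J_1F$ lies in the plane spanned by $F_u$ and the unit normal, so its tangential projection is $F_u$. A direct computation, using only that $A$ preserves $\langle\cdot,\cdot\rangle$ and commutes with $J_1$ together with the numerical values of $g_{11},g_{33},\alpha_1,\alpha_2$, gives $g_\tau(F_u,F_u)=-\langle\gamma',\gamma'\rangle+(1+\tau^2)\langle\gamma',J_1\gamma\rangle^2=\sin^2\vartheta$ and likewise $g_\tau(E_1,F_u)=\sin^2\vartheta$, so \eqref{viceversa1} holds; and with $\xi=\cst$, \eqref{viceversa2} becomes \eqref{eq-alpha123}, which holds by hypothesis. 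Hence $F$ defines a helix surface, with constant angle the $\vartheta$ for which $B=(\tau^2+1)\cos^2\vartheta-1>0$, in particular $\vartheta\neq\pi/2$.

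The step I expect to be the main obstacle is the middle part of the direct implication: distilling from the first order system \eqref{eqprime} — not merely from \eqref{eqquarta} — the precise algebraic relations among $g^1,g^2,g^3,g^4$ and organizing them, via the $e^{\pm\alpha_i uJ_1}$ factorization and the multiplication table of $J_1,J_2,J_3$, so as to conclude simultaneously that $A(v)$ commutes with $J_1$ and that $\xi$ is constant. The subsequent reduction of \eqref{viceversa2} to \eqref{eq-alpha123} is longer but is essentially a bookkeeping computation.
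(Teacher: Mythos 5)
Your proposal is correct and, in two places, takes a genuinely different route from the paper. First, to obtain the relations $g^2=J_1g^1$ and $g^4=-J_1g^3$ (equivalently $J_1e_1=e_2$, $J_1e_3=-e_4$), the paper evaluates the identities \eqref{eq-fu-jf-main} at $(0,v)$ and solves the resulting linear system for the products $\langle J_1e_i,e_j\rangle$; you instead read them off from the complex form $z_{uu}+i\tilde b\,z_u-\tilde a\,z=0$ of \eqref{eqsegunda}, whose characteristic roots are indeed $i\alpha_1$ and $-i\alpha_2$, so the mode decomposition gives exactly the claimed pairing. Your route is shorter and makes the $J_1$--equivariance structurally transparent. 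Second, for $\xi=\cst$ the paper differentiates $\langle F_v,F_v\rangle=-\sin^2\vartheta$ in $u$ at $u=0$, obtains $\xi'h=\xi'k=0$ with $h^2+k^2=4(\xi_1')^2+\sinh^2(2\xi_1)(-\xi'+\xi_2'+\xi_3')^2$, and then rules out the degenerate alternative by a separate computation showing it would produce a Hopf tube; you propose to extract $\xi=\cst$ directly from the first--order system \eqref{eqprime} with $\varphi=-2\tau^{-1}Bu+c$. This does work: writing $F_u=M(\varphi(u))F$ with $M(\varphi)=\sin\vartheta\,[-\tau^{-1}\sin\vartheta\, J_1-\cos\vartheta(\cos\varphi\,J_2+\sin\varphi\,J_3)]$ and $F=A(v)\gamma(u)$ gives $\gamma'=A^{-1}M(\varphi)A\,\gamma$, and since $A^{-1}(\cos\varphi\,J_2+\sin\varphi\,J_3)A=\cos(\varphi+\sigma)J_2+\sin(\varphi+\sigma)J_3$ with $\sigma(v)=\xi(v)-\pi/2$ for $A$ as in \eqref{eq-descrizione-A}, the $v$--independence of $\gamma$ together with the linear independence of $J_2\gamma(u)$ and $J_3\gamma(u)$ forces $\sigma$, hence $\xi$, to be constant. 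This buys you a proof with no case split and no computation of the normal component $N_1$, at the price of the conjugation identity $\sigma=\xi-\pi/2$, which is the one nontrivial verification your sketch leaves implicit (it is not visible from \eqref{eq-descrizione-A} without a computation, e.g.\ in the $\mathrm{U}(1,1)$ picture where $a=\cosh\xi_1e^{i\xi_2}$, $d=-i\cosh\xi_1e^{i(\xi-\xi_2)}$ and $e^{i\sigma}=d/\bar a$). The converse and the reduction of \eqref{viceversa2} to \eqref{eq-alpha123} are handled exactly as in the paper, via Proposition~\ref{pro-viceversa}.
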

\begin{proof}
With respect to the local coordinates $(u,v)$ on $M$ defined in \eqref{eq:local-coordinates}, Proposition~\ref{prop-Fuv} implies that the position vector of the  helix  surface in $\r^4_2$ is given by
$$
F(u,v)=\cos(\alpha_1 u)\,g^1(v)+\sin(\alpha_1 u)\,g^2(v)+\cos(\alpha_2 u)\,g^3(v)+\sin(\alpha_2 u)\,g^4(v)\,,
$$
where the vector fields  $\{g^i(v)\}_{i=1}^4$ are mutually orthogonal  and
$$
||g^1(v)||=||g^2(v)||=\sqrt{-g_{11}}=\text{constant}\,,
$$
$$
||g^3(v)||=||g^4(v)||=\sqrt{g_{33}}=\text{constant}\,.
$$
Thus, if we put $e_i(v)=g^i(v)/||g^i(v)||$, $i\in\{1,\dots,4\}$, we can write:
\begin{eqnarray}\label{eq:Fei}
F(u,v)=&\sqrt{-g_{11}}\,(\cos (\alpha_1\,u)\,e_1(v)+\sin(\alpha_1\,u)\,e_2(v))\nonumber\\
&+\sqrt{g_{33}}\,(\cos (\alpha_2\,u)\,e_3(v)+\sin(\alpha_2\,u)\,e_4(v))\,.
\end{eqnarray}

Now, the identities  \eqref{eq-fu-jf-main}, evaluated in  $(0,v)$,  become respectively:
\begin{equation}\label{eq1bis}\begin{aligned}
    &\alpha_2\,g_{33}\langle J_1e_3,e_4\rangle-\alpha_1 g_{11}\langle J_1e_1,e_2\rangle\\&+\sqrt{-g_{11}g_{33}}\,
    (\alpha_1\langle J_1e_3,e_2\rangle+\alpha_2\langle J_1e_1,e_4\rangle)
    =-\tau^{-1}\sin^2\vartheta,
    \end{aligned}
\end{equation}
\begin{equation}
     \langle J_1e_1,e_3\rangle=0\,,
\end{equation}
\begin{equation}\label{39}\begin{aligned}
&\alpha_2^3\,g_{33}\langle J_1e_3,e_4\rangle-\alpha_1^3\,g_{11}\langle J_1e_1,e_2\rangle\\&+
\sqrt{-g_{11}g_{33}}\,(\alpha_1\alpha_2^2\langle J_1e_3,e_2\rangle+\alpha_1^2\alpha_2\langle J_1e_1,e_4\rangle)=-I,
\end{aligned}
\end{equation}
\begin{equation}
    \langle J_1e_2,e_4\rangle=0\,,
\end{equation}
\begin{equation}\label{eq2bis}
    \alpha_1\langle J_1e_2,e_3\rangle+\alpha_2\langle J_1e_1,e_4\rangle=0\,,
\end{equation}
\begin{equation}\label{eq3bis}
    \alpha_2\langle J_1 e_2,e_3\rangle+\alpha_1\langle J_1e_1,e_4\rangle=0\,.
\end{equation}
We point out that  to obtain the previous identities we have divided by $\alpha_1^2-\alpha_2^2=4 \tau^{-1} \sqrt{B^3} \cos\vartheta$ which is, by the assumption on $\vartheta$, always different from zero.
From \eqref{eq2bis} and \eqref{eq3bis}, taking into account the $\alpha_1^2-\alpha_2^2\neq 0$, it results that
\begin{equation}\label{eq4bis}
     \langle J_1e_3,e_2\rangle=0\,,\qquad \langle J_1e_1,e_4\rangle=0\,.
\end{equation}
Therefore
$$
|\langle J_1e_1,e_2\rangle|=1=|\langle J_1e_3,e_4\rangle|.
$$
Substituting \eqref{eq4bis} in \eqref{eq1bis} and \eqref{39}, we obtain the system
\begin{equation}\label{j34}
\left\{\begin{aligned}\nonumber
&\alpha_1\, g_{11}\langle J_1e_1,e_2\rangle-\alpha_2\,g_{33}\langle J_1e_3,e_4\rangle=\tau^{-1}\sin^2\vartheta\\
 & \alpha_1^3\, g_{11}\langle J_1e_1,e_2\rangle-\alpha_2^3\,g_{33}\langle J_1e_3,e_4\rangle=I\,,
\end{aligned}
\right.
\end{equation}
a solution of which is
$$
\langle J_1e_1,e_2\rangle=\frac{\tau I-\alpha_2^2\sin^2\vartheta}{\tau g_{11}\,\alpha_1 (\alpha_1^2-\alpha_2^2)}\,,\qquad \langle J_1e_3,e_4\rangle=\frac{\tau I-\alpha_1^2\sin^2\vartheta}{\tau g_{33}\,\alpha_2(\alpha_1^2-\alpha_2^2)}\,.
$$
Now, as
$$
g_{11}\,g_{33}=-\frac{\sin^2\vartheta}{4B}\,,\qquad \alpha_1\,\alpha_2=\frac{B}{\tau^2}\sin^2\vartheta\,,\qquad \alpha_1^2-\alpha_2^2=\frac{4\sqrt{B^3}}{\tau}\cos\vartheta\,,
$$
it results that
$$
\langle J_1e_1,e_2\rangle\langle J_1e_3,e_4\rangle=1\,.
$$
Moreover, as $$\tau I-\alpha_2^2\sin^2\vartheta=2\tau^{-1}\,\sqrt{B^3}\cos\vartheta\sin^2\vartheta,$$  it results that $\langle J_1e_1,e_2\rangle<0$.
Consequently,
$\langle J_1e_1,e_2\rangle=\langle J_1e_3,e_4\rangle=-1$ and $J_1e_1=e_2$, $J_1e_3=-e_4$.

Then, if we fix the orthonormal basis  of $\r^4_2$ given by
$$
\tilde{E}_1=(0,0,1,0)\,,\quad \tilde{E}_2=(0,0,0,1)\,,\quad \tilde{E}_3=(1,0,0,0)\,,\quad \tilde{E}_4=(0,-1,0,0)\,,
$$
there must exists a 1-parameter family of $4\times 4$ indefinite orthogonal matrices $A(v)\in \mathrm{O}_2(4)$, with $J_1A(v)=A(v)J_1$,
such that $e_i(v)=A(v)\tilde{E}_i$.
Replacing $e_i(v)=A(v)\tilde{E}_i$ in \eqref{eq:Fei} we obtain
$$
F(u,v)=A(v)\gamma(u)\,,
$$
where
$$
\gamma(u)=(\sqrt{g_{33}}\,\cos (\alpha_2\, u),-\sqrt{g_{33}}\,\sin (\alpha_2\, u),\sqrt{-g_{11}}\,\cos (\alpha_1\, u),\sqrt{-g_{11}}\,\sin (\alpha_1\, u))\,
$$
is a curve in $\S$.

Let now examine the $1$-parameter family $A(v)$ that, according to \eqref{eq-descrizione-A},
depends on four functions $\xi_1(v),\xi_2(v),\xi_3(v)$ and $\xi(v)$. From \eqref{eq-definition-Fv}, it results that $\langle F_v, F_v\rangle=-\sin^2 \vartheta=\cst$. The latter implies that
\begin{equation}\label{eq-fv-fv-sin-theta-d-u}
\frac{\partial}{\partial u}\langle F_v, F_v\rangle_{| u=0}=0\,.
\end{equation}
Now, if we denote by ${\mathbf c_1},{\mathbf c_2},{\mathbf c_3},{\mathbf c_4}$ the four colons of $A(v)$, \eqref{eq-fv-fv-sin-theta-d-u} implies that
\begin{equation}\label{sistem-c23-c24}
\begin{cases}
\langle {\mathbf c_2}',{\mathbf c_3}'\rangle=0\\
\langle {\mathbf c_2}',{\mathbf c_4}'\rangle=0\,,
\end{cases}
\end{equation}
where with $'$ we means the derivative with respect to $v$.
Replacing in \eqref{sistem-c23-c24} the expressions of the ${\mathbf c_i}$'s as functions of $\xi_1(v),\xi_2(v),\xi_3(v)$ and $\xi(v)$, we obtain
\begin{equation}\label{sistemK-H}
\begin{cases}
\xi'\, h(v)=0\\
\xi'\, k(v)=0\,,
\end{cases}
\end{equation}
where $h(v)$ and $k(v)$ are two functions such that
$$
h^2+k^2=4 (\xi_1')^2+\sinh^2(2\xi_1)\, (-\xi'+\xi_2'+\xi_3')^2\,.
$$
From \eqref{sistemK-H} we have two possibilities:
\begin{itemize}
\item[(i)] $\xi=\cst$;
\item[] or
\item[(ii)] $4 (\xi_1')^2+\sinh^2(2\xi_1)\, (-\xi'+\xi_2'+\xi_3')^2=0$.
\end{itemize}
We will show that case (ii) cannot occur, more precisely we will show that if (ii) happens  then the parametrization $F(u,v)=A(v)\gamma(u)$ defines a Hopf tube, that is the Hopf vector field $E_1$ is tangent to the surface. To this end, we write the unit normal vector field $N$ as
$$
N=\frac{N_1 E_1+N_2E_2+N_3 E_3}{\sqrt{N_1^2+N_2^2+N_3^2}}\,.
$$
A long but straightforward computation (that can be also made using a software of symbolic computations) gives
$$\begin{aligned}
N_1&=1/2 (\alpha_1 + \alpha_2) \sqrt{-g_{11}} \sqrt{g_{33}}\,[2\xi_1'\,\cos(\alpha_1 u+\alpha_2 u-\xi_2+\xi_3)\\&+
  \sinh(2\xi_1)\sin(\alpha_1 u+\alpha_2 u-\xi_2+\xi_3)( -\xi'+\xi_2'+\xi_3')]\,.
  \end{aligned}
$$

Now case (ii) occurs if and only if $\xi_1=\cst=0$, or if $\xi_1=\cst\neq 0$ and $-\xi'+\xi_2'+\xi_3'=0$. In both cases $N_1=0$ and this implies that $g_{\tau}(N,J_1F)=-\tau g_{\tau}(N,E_1)=0$, i.e. the Hopf vector field is tangent to the surface. Thus we have proved that $\xi=\cst$. \\
Finally, in this case, \eqref{viceversa2} is equivalent to
$$
\tau \cos\vartheta\sqrt{B}[\cosh^2(\xi_1(v)) \,\xi_{2}'(v)+\sinh^2(\xi_1(v))\, \xi_{3}'(v)]=0\,.
$$
Since $\vartheta\neq\pi/2$ we conclude that condition \eqref{eq-alpha123} is satisfied.\\

The converse follows immediately from Proposition~\ref{pro-viceversa} since a direct
calculation shows that $g_{\tau}(F_u,F_u)=g_{\tau}(E_1,F_u)=\sin^2\vartheta$ which is \eqref{viceversa1},
while \eqref{eq-alpha123} is equivalent to \eqref{viceversa2}.
\end{proof}

\section{The case $B<0$}
In this section we study the case $B<0$. Integrating \eqref{eqquarta} we have the following result:
\begin{proposition}\label{prop-beta}
Let $M^2$ be a helix surface in $\S$ with constant angle $\vartheta$ and $B<0$. Then, with respect to the local coordinates $(u,v)$ on $M$ defined in \eqref{eq:local-coordinates},  the position vector $F$ of $M^2$ in $\r^4_2$
is given by
\begin{equation}\label{eqexpres}\begin{aligned}
F(u,v)&=\cos\big(\frac{\tilde{b}}{2}\, u\big)\,[\cosh (\beta\, u)\,w^1(v)+\sinh (\beta\, u)\,w^3(v)]\\&+
\sin\big(\frac{\tilde{b}}{2}\, u\big)\,[\cosh (\beta\, u)\,w^2(v)+\sinh (\beta\, u)\,w^4(v)],
\end{aligned}
\end{equation}
where
$$
\beta=\sqrt{-B}\,\cos\vartheta
$$
is a real constant, $\tilde{b}=-2 \tau^{-1}\, B$, while the $w^i(v)$, $i\in\{1,\dots,4\}$, are vector fields  in $\r^4_2$, depending only on $v$, such that
\begin{equation}\begin{aligned}\label{eq47}
\langle w^1(v),w^1(v)\rangle&=\langle w^2(v),w^2(v)\rangle=-\langle w^3(v),w^3(v)\rangle=-\langle w^4(v),w^4(v)\rangle=1,\\
\langle w^1(v),w^2(v)\rangle&=\langle w^1(v),w^3(v)\rangle=\langle w^2(v),w^4(v)\rangle=\langle w^3(v),w^4(v)\rangle=0,\\
\langle w^1(v),w^4(v)\rangle&=-\langle w^2(v),w^3(v)\rangle=-\frac{2\beta}{\tilde{b}}.
\end{aligned}
\end{equation}
\end{proposition}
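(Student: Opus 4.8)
The plan is to follow the scheme of the proof of Proposition~\ref{prop-Fuv}, the only genuine difference being the nature of the roots of the characteristic polynomial of \eqref{eqquarta}. Since $B<0$, the constants in \eqref{eq:value-a-b} satisfy $\tilde a=-\tau^{-2}\sin^2\vartheta\,B>0$ and $\tilde b=-2\tau^{-1}B>0$; moreover, using $B=(\tau^2+1)\cos^2\vartheta-1$, one checks the identities $\tilde b^2-4\tilde a=4B\cos^2\vartheta$ and $\tilde a=\beta^2+\tilde b^2/4$, where $\beta=\sqrt{-B}\,\cos\vartheta$. Hence the equation $\rho^2+(\tilde b^2-2\tilde a)\rho+\tilde a^2=0$ for $\rho=r^2$ has the conjugate complex solutions $\rho=\tilde a-\tfrac{\tilde b^2}{2}\pm i\,\tilde b\beta=(\beta\pm i\tfrac{\tilde b}{2})^2$, so the characteristic polynomial $r^4+(\tilde b^2-2\tilde a)r^2+\tilde a^2$ of \eqref{eqquarta} has the four simple roots $\pm\beta\pm i\,\tfrac{\tilde b}{2}$. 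As \eqref{eqquarta} is, for each fixed $v$, a linear ODE in $u$ with constant coefficients, integrating it yields \eqref{eqexpres}, where $\{\cos(\tfrac{\tilde b}{2}u)\cosh(\beta u),\,\cos(\tfrac{\tilde b}{2}u)\sinh(\beta u),\,\sin(\tfrac{\tilde b}{2}u)\cosh(\beta u),\,\sin(\tfrac{\tilde b}{2}u)\sinh(\beta u)\}$ is a fundamental system and $w^1(v),\dots,w^4(v)\in\r^4_2$ are the ($v$-dependent) vectors of integration constants.

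It then remains to determine the inner products $w_{ij}(v)=\langle w^i(v),w^j(v)\rangle$, which I would do exactly as the $g_{ij}$ were computed in Proposition~\ref{prop-Fuv}. Differentiating \eqref{eqexpres} in $u$ and evaluating at $u=0$, and noting that the scalar functions multiplying $w^1$ and $w^4$ are even in $u$ while those multiplying $w^2$ and $w^3$ are odd, one gets
\[
F|_{u=0}=w^1,\qquad F_u|_{u=0}=\tfrac{\tilde b}{2}\,w^2+\beta\,w^3,\qquad F_{uu}|_{u=0}=\bigl(\beta^2-\tfrac{\tilde b^2}{4}\bigr)w^1+\tilde b\beta\,w^4,
\]
with $F_{uuu}|_{u=0}\in\mathrm{span}\{w^2,w^3\}$ and $F_{uuuu}|_{u=0}\in\mathrm{span}\{w^1,w^4\}$ (the last of these can also be read off directly from \eqref{eqquarta}). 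Substituting into the ten scalar identities \eqref{eq:Fprocuct}, evaluated at $u=0$, produces a linear system in the ten unknowns $w_{ij}$ which, by the parity just observed, decouples into three blocks: the four ``mixed'' identities $\langle F,F_u\rangle=\langle F_u,F_{uu}\rangle=\langle F_{uu},F_{uuu}\rangle=\langle F,F_{uuu}\rangle=0$ force $w_{12}=w_{13}=w_{24}=w_{34}=0$; the three ``even'' identities $\langle F,F\rangle=1$, $\langle F,F_{uu}\rangle=-\tilde a$, $\langle F_{uu},F_{uu}\rangle=D$ determine $w_{11},w_{14},w_{44}$; and the three ``odd'' identities $\langle F_u,F_u\rangle=\tilde a$, $\langle F_u,F_{uuu}\rangle=-D$, $\langle F_{uuu},F_{uuu}\rangle=E$ determine $w_{22},w_{23},w_{33}$. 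Solving these and simplifying with $D=\tilde a\tilde b^2-3\tilde a^2$, $E=(\tilde b^2-2\tilde a)D-\tilde a^3$ and the identities for $\tilde a,\tilde b,\beta$ recorded above, one obtains $w_{11}=w_{22}=-w_{33}=-w_{44}=1$ and $w_{14}=-w_{23}=-2\beta/\tilde b$, which is exactly \eqref{eq47}.

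The individual computations are elementary; the real work is the bookkeeping — correctly identifying the complex roots of the characteristic polynomial, tracking the alternating parity of the $u$-derivatives of $F$ at $u=0$, and repeatedly using the algebraic relations between $\tilde a,\tilde b,D,E$ and $\beta,B,\tau,\vartheta$ to see that the (at first sight overdetermined) system is consistent and collapses to \eqref{eq47}. That consistency is in fact automatic, since $F$ is the position vector of an honest surface in $\r^4_2$ with $\langle F,F\rangle\equiv1$, so each relation in Remark~\ref{re-value-fuu} is a genuine consequence; only the explicit values carry content. Finally, unlike the case $B>0$ there is no division by a quantity such as $\alpha_1^2-\alpha_2^2$; here the relevant nonvanishing quantities are $\beta$ and $\tilde b$, both nonzero because $\vartheta\in(0,\pi/2)$ and $B\neq0$.
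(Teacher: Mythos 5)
Your proposal is correct and follows essentially the same route as the paper: integrate the fourth-order constant-coefficient equation \eqref{eqquarta} (the characteristic roots being $\pm\beta\pm i\,\tilde b/2$), then evaluate the ten relations of Remark~\ref{re-value-fuu} at $u=0$ to determine the products $w_{ij}$. The parity-based decoupling you describe is precisely how the paper's ten displayed equations split and are solved, so the two arguments coincide.
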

\begin{proof}
A direct integration of \eqref{eqquarta}, gives the solution
$$\begin{aligned}
F(u,v)&=\cos\big(\frac{\tilde{b}}{2}\, u\big)\,[\cosh (\beta\, u)\,w^1(v)+\sinh (\beta\, u)\,w^3(v)]\\&+
\sin\big(\frac{\tilde{b}}{2}\, u\big)\,[\cosh (\beta\, u)\,w^2(v)+\sinh (\beta\, u)\,w^4(v)],
\end{aligned}
$$
where
$$
\beta=\frac{\sqrt{4\tilde{a}-\tilde{b}^2}}{2}=\sqrt{-B}\,\cos\vartheta
$$
is a constant, while the $w^i(v)$, $i\in\{1,\dots,4\}$,  are vector fields in $\r^4$ which depend only on $v$.
If $w_{ij}(v):=\langle w^i(v),w^j(v)\rangle$, evaluating the relations \eqref{eq:Fprocuct} in $(0,v)$, we obtain
\begin{equation}\label{uno2}
    w_{11}=1,
\end{equation}
\begin{equation}\label{due2}
    \frac{\tilde{b}^2}{4}\,w_{22}+\beta^2\,w_{33}+\beta\,\tilde{b}\,w_{23}=\tilde{a},
\end{equation}
\begin{equation}\label{tre2}
    \frac{\tilde{b}}{2}\,w_{12}+\beta \,w_{13}=0,
\end{equation}
\begin{equation}\label{quatro2}
     \frac{\tilde{b}}{2}\,\Big(\beta^2-\frac{\tilde{b}^2}{4}\Big)\,w_{12}+\beta^2 \,\tilde{b}\,w_{34}+\beta\,\frac{\tilde{b}^2}{2}\,w_{24}
     +\beta\,\Big(\beta^2-\frac{\tilde{b}^2}{4}\Big)\,w_{13}=0,
\end{equation}
\begin{equation}\label{cinque2}
   \Big(\beta^2-\frac{\tilde{b}^2}{4}\Big)^2\,w_{11}+\beta^2 \,\tilde{b}^2\,w_{44}
     +2\beta\,\tilde{b}\,\Big(\beta^2-\frac{\tilde{b}^2}{4}\Big)\,w_{14}=D,
\end{equation}
\begin{equation}\label{sei2}
   \Big(\beta^2-\frac{\tilde{b}^2}{4}\Big)\,w_{11}+\beta \,\tilde{b}\,w_{14}=-\tilde{a},
\end{equation}
\begin{equation}\label{sette2}
    \frac{\tilde{b}^2}{4}\, \Big(3\beta^2-\frac{\tilde{b}^2}{4}\Big)\,w_{22}+\beta^2 \,\Big(\beta^2-3\frac{\tilde{b}^2}{4}\Big)\,w_{33}
     +\beta\,\frac{\tilde{b}}{2}\,(4\beta^2-\tilde{b}^2)\,w_{23}=-D,
\end{equation}
\begin{equation}\label{otto2}\begin{aligned}
   &\frac{\tilde{b}}{2}\, \Big(3\beta^2-\frac{\tilde{b}^2}{4}\Big)\, \Big(\beta^2-\frac{\tilde{b}^2}{4}\Big)\,w_{12}+\tilde{b}\,\beta^2 \,\Big(\beta^2-3\frac{\tilde{b}^2}{4}\Big)\,w_{34}\\&
     +\beta\,\Big(\beta^2-3\frac{\tilde{b}^2}{4}\Big)\,\Big(\beta^2-\frac{\tilde{b}^2}{4}\Big)w_{13}+\beta\, \frac{\tilde{b}^2}{2}\, \Big(3\beta^2-\frac{\tilde{b}^2}{4}\Big)\,w_{24}=0,
     \end{aligned}
\end{equation}
\begin{equation}\label{nove2}
   \frac{\tilde{b}}{2}\,\Big(3\beta^2-\frac{\tilde{b}^2}{4}\Big)\,w_{12}+\beta\,\Big(\beta^2-3\frac{\tilde{b}^2}{4}\Big)\,w_{13}
  =0,
\end{equation}
\begin{equation}\label{dieci2}\begin{aligned}
   &\frac{\tilde{b}^2}{4}\, \Big(3\beta^2-\frac{\tilde{b}^2}{4}\Big)^2\,w_{22}+\beta^2 \,\Big(\beta^2-3\frac{\tilde{b}^2}{4}\Big)^2\,w_{33}\\&
     +\beta\,\tilde{b}\,\Big(3\beta^2-\frac{\tilde{b}^2}{4}\Big)\, \Big(\beta^2-3\frac{\tilde{b}^2}{4}\Big)\,w_{23}=E.
     \end{aligned}
\end{equation}
From  \eqref{uno2}, \eqref{cinque2} and \eqref{sei2}, it follows that
$$w_{11}=-w_{44}=1, \qquad w_{14}=-\frac{2\,\beta}{\tilde{b}}.$$
Also, from \eqref{tre2} and \eqref{nove2}, we obtain
$$w_{12}=w_{13}=0$$ and, therefore, from \eqref{quatro2} and \eqref{otto2},
$$w_{24}=w_{34}=0.$$
Moreover, using \eqref{due2}, \eqref{sette2} and \eqref{dieci2}, we get
$$w_{22}=-w_{33}=1, \qquad w_{23}=\frac{2\,\beta}{\tilde{b}}.$$
\end{proof}

\begin{theorem}\label{teo-principal2}
Let $M^2$ be a helix surface in  $\S$ with constant angle $\vartheta\neq\pi/2$ so that $B<0$. Then, locally,
the position vector of $M^2$ in $\r^4_2$, with respect to the local coordinates $(u,v)$ on $M$ defined in \eqref{eq:local-coordinates}, is given by
$$
F(u,v)=A(v)\,\gamma(u)\,,
$$
where the curve
$
\gamma(u)=(\gamma_1(u),\gamma_2(u),\gamma_3(u),\gamma_4(u)))
$
is given by
\begin{equation}\label{gamma}\left\{\begin{aligned}
\gamma_1(u)&=\cos\Big(\frac{\tilde{b}}{2}u\Big)\,\cosh (\beta\,u)-\frac{2\beta}{\tilde{b}}\,\sin\Big(\frac{\tilde{b}}{2}u\Big)\,\sinh (\beta\,u),\\
\gamma_2(u)&=\sin\Big(\frac{\tilde{b}}{2}u\Big)\,\cosh (\beta\,u)+\frac{2\beta}{\tilde{b}}\,\cos\Big(\frac{\tilde{b}}{2}u\Big)\,\sinh (\beta\,u),\\
\gamma_3(u)&=\frac{\sin\vartheta}{\sqrt{-B}}\,\cos\Big(\frac{\tilde{b}}{2}u\Big)\,\sinh (\beta\,u),\\
\gamma_4(u)&=\frac{\sin\vartheta}{\sqrt{-B}}\,\sin\Big(\frac{\tilde{b}}{2}u\Big)\,\sinh (\beta\,u)\,,\\
\end{aligned}
\right.
\end{equation}
$\beta=\sqrt{-B}\,\cos\vartheta$, $\tilde{b}=-2 \tau^{-1}\, B$ and $A(v)=A(\xi,\xi_1,\xi_2,\xi_3)(v)$ is a 1-~parameter family of $4\times 4$ indefinite orthogonal matrices anticommuting with $J_1$, as described in \eqref{eq-descrizione-A}, with
$\xi=\cst$ and
\begin{equation}\label{eq-alpha123second}
\begin{aligned}
&\sin\vartheta\,[2\cos(\xi_2(v)-\xi_3(v))\,\xi_1'(v)+(\xi_2'(v)+\xi_3'(v))\sin(\xi_2(v)-\xi_3(v))\,\sinh(2\xi_1(v))]\\
&-2\tau\cos\vartheta\,[\cosh^2(\xi_1(v)) \,\xi_{2}'(v)+\sinh^2(\xi_1(v))\, \xi_{3}'(v)] =0\,.
 \end{aligned}
\end{equation}
Conversely, a parametrization $F(u,v)=A(v)\,\gamma(u)$, with $\gamma(u)$ and $A(v)$ as above, defines
 a helix surface in $\S$ with constant angle $\vartheta\neq\pi/2$.
\end{theorem}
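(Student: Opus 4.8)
The plan is to follow the scheme of the proof of Theorem~\ref{teo-principal1}, starting from the integrated expression for the position vector provided by Proposition~\ref{prop-beta}. The first point is that, unlike the case $B>0$, the frame $\{w^1(v),\dots,w^4(v)\}$ of that proposition is \emph{not} pseudo-orthonormal: by \eqref{eq47} its only nonzero mixed products are $\langle w^1,w^4\rangle=-\langle w^2,w^3\rangle=-2\beta/\tilde b$, so $\mathrm{span}(w^1,w^4)$ and $\mathrm{span}(w^2,w^3)$ are two mutually orthogonal Lorentzian planes. I would therefore set $e_1=w^1$, $e_2=w^2$ (which are unit spacelike) and take $e_4\in\mathrm{span}(w^1,w^4)$, $e_3\in\mathrm{span}(w^2,w^3)$ to be the unit timelike vectors orthogonal to $e_1$ and $e_2$ respectively. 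A short computation, using $-B=\sin^2\vartheta-\tau^2\cos^2\vartheta$ (hence $1+4\beta^2/\tilde b^2=\sin^2\vartheta/(-B)$), gives $w^3=\tfrac{\sin\vartheta}{\sqrt{-B}}\,e_3+\tfrac{2\beta}{\tilde b}\,e_2$ and $w^4=\tfrac{\sin\vartheta}{\sqrt{-B}}\,e_4-\tfrac{2\beta}{\tilde b}\,e_1$; inserting these into \eqref{eqexpres} and collecting the coefficients of $e_1,\dots,e_4$ reproduces exactly $F(u,v)=\sum_{i=1}^{4}\gamma_i(u)\,e_i(v)$ with the $\gamma_i$ of \eqref{gamma}. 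Since $\{e_i(v)\}$ is a pseudo-orthonormal frame of $\r^4_2$ with the signature pattern $(+,+,-,-)$ of the standard basis $\hat E_1,\dots,\hat E_4$, there is a unique $A(v)\in\mathrm O_2(4)$ with $e_i(v)=A(v)\hat E_i$, whence $F(u,v)=A(v)\gamma(u)$.

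Next I would prove that $A(v)$ anticommutes with $J_1$. For this I would evaluate at $u=0$ the six identities \eqref{eq-fu-jf-main} together with the system \eqref{eqsegunda}, using the explicit low-order Taylor coefficients of $\gamma$ (for instance $\gamma(0)=(1,0,0,0)$, $\gamma'(0)=(0,\tau^{-1}\sin^2\vartheta,\sin\vartheta\cos\vartheta,0)$ and $\gamma''(0)=(-\tfrac{\tilde b}{2\tau}\sin^2\vartheta,0,0,\tilde b\sin\vartheta\cos\vartheta)$), so that $F(0,v)=e_1(v)$, $F_u(0,v)=\tau^{-1}\sin^2\vartheta\,e_2(v)+\sin\vartheta\cos\vartheta\,e_3(v)$, and so on, together with the elementary facts $\langle J_1 w,w\rangle=0$, $\langle J_1 v,J_1 w\rangle=\langle v,w\rangle$. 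This produces a linear system for the inner products $\langle J_1 e_i,e_j\rangle$. As in the case $B>0$, one checks that every mixed product $\langle J_1 e_i,e_j\rangle$ with $\{i,j\}$ different from $\{1,2\}$ and $\{3,4\}$ vanishes, while the two surviving ones equal $-1$; that is, $J_1 e_1=-e_2$ and $J_1 e_3=-e_4$, which is precisely the statement that the matrix $A(v)$ with columns $e_1(v),\dots,e_4(v)$ anticommutes with $J_1$. (Here the case $B<0$ genuinely diverges from $B>0$, where the analogous computation yields a family commuting with $J_1$.)

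Writing $A(v)=A(\xi,\xi_1,\xi_2,\xi_3)(v)$ as in \eqref{eq-descrizione-A}, now in the anticommuting variant, I would impose condition \eqref{viceversa2} of Proposition~\ref{pro-viceversa}. Exactly as in the proof of Theorem~\ref{teo-principal1}, the requirement $\partial_u\langle F_v,F_v\rangle_{|u=0}=0$ forces either $\xi=\cst$ or $4(\xi_1')^2+\sinh^2(2\xi_1)\,(-\xi'+\xi_2'+\xi_3')^2=0$, and the second alternative is ruled out by computing the $E_1$-component $N_1$ of the unit normal and observing that it then vanishes identically, so that the surface would be a Hopf tube, against $\vartheta\neq\pi/2$. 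With $\xi=\cst$, a direct computation shows that \eqref{viceversa2} is equivalent to \eqref{eq-alpha123second}. For the converse, given $F(u,v)=A(v)\gamma(u)$ with $\gamma$ and $A$ as in the statement, I would verify the hypotheses of Proposition~\ref{pro-viceversa}: first $\langle\gamma,\gamma\rangle\equiv1$ -- using $4\beta^2/\tilde b^2-\sin^2\vartheta/(-B)=-1$ one gets $\gamma_1^2+\gamma_2^2-\gamma_3^2-\gamma_4^2=\cosh^2(\beta u)-\sinh^2(\beta u)=1$, so $F$ takes values in $\S$ -- and then, from \eqref{eq-def-gtau} together with $AJ_1=-J_1A$ and $A\in\mathrm O_2(4)$ (so that $\langle A\gamma',J_1A\gamma\rangle=-\langle\gamma',J_1\gamma\rangle$ and $\langle J_1A\gamma,A\gamma'\rangle=-\langle J_1\gamma,\gamma'\rangle$), a direct computation gives $g_\tau(F_u,F_u)=g_\tau(E_1,F_u)=\sin^2\vartheta$, which is \eqref{viceversa1}, while \eqref{viceversa2} is again equivalent to \eqref{eq-alpha123second}.

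The step I expect to be the main obstacle is the determination of the $J_1$-action on the frame $\{e_i(v)\}$: the sign bookkeeping in \eqref{eq-fu-jf-main} and \eqref{eqsegunda} that isolates the \emph{anticommuting} family $A(v)$ (ruling out the a priori possible ``mixed'' behaviour of $J_1$ on $\{e_i\}$), together with the explicit normal-vector computation needed to discard the Hopf-tube branch of \eqref{viceversa2}. Everything else is a long but routine manipulation of trigonometric and hyperbolic expressions.
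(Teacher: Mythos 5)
Your proposal is correct and follows essentially the same route as the paper: you orthonormalize the frame $\{w^i\}$ of Proposition~\ref{prop-beta} into exactly the basis $e_1=w^1$, $e_2=w^2$, $e_3=\tfrac{1}{\sin\vartheta}[\sqrt{-B}\,w^3-\tau\cos\vartheta\,w^2]$, $e_4=\tfrac{1}{\sin\vartheta}[\sqrt{-B}\,w^4+\tau\cos\vartheta\,w^1]$ used in the paper (you reach it by Gram--Schmidt on the Lorentzian planes rather than by positing it, which is only a presentational difference), then determine $J_1e_1=-e_2$, $J_1e_3=-e_4$ from \eqref{eq-fu-jf-main} at $u=0$, exclude the non-constant-$\xi$ branch via the Hopf-tube argument, and close with Proposition~\ref{pro-viceversa}, all as in the paper's proof.
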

\begin{proof}
From \eqref{eq47}, we can define the following orthonormal basis in $\r^4_2$:
\begin{equation}\left\{\begin{aligned}
e_1(v)&=w^1(v),\\
e_2(v)&=w^2(v),\\
e_3(v)&=\frac{1}{\sin\vartheta}[\sqrt{-B}\,w^3(v)-\tau\cos\vartheta\,w^2(v)],\\
e_4(v)&=\frac{1}{\sin\vartheta}[\sqrt{-B}\,w^4(v)+\tau\cos\vartheta\,w^1(v)],
\end{aligned}
\right.
\end{equation}
with $\langle e_1,e_1\rangle=1=\langle e_2,e_2\rangle$ and $\langle e_3,e_3\rangle=-1=\langle e_4,e_4\rangle$.

Evaluating the identities \eqref{eq-fu-jf-main}
in  $(0,v)$, and taking into account that:
\begin{equation}
\begin{aligned}\nonumber
&F(0,v)=w^1(v)\,,\\
&F_u(0,v)=\frac{\tilde{b}}{2}\,w^2(v)+\beta\,w^3(v)\,,\\
&F_{uu}(0,v)=\Big(\beta^2-\frac{\tilde{b}^2}{4}\Big)\,w^1(v)+\beta\, \tilde{b}\,w^4(v)\,,\\
&F_{uuu}(0,v)=\frac{\tilde{b}}{2}\,\Big(3\beta^2 -\frac{\tilde{b}^2}{4})\,w^2(v)+\beta\,\Big(\beta^2-\frac{3}{4}\tilde{b}^2\Big)\,w^3(v)\,,\\
&F_{uuuu}(0,v)=\Big(\beta^4-\frac{3}{2}\beta^2\,\tilde{b}^2+\frac{\tilde{b}^4}{16}\Big)\,w^1(v)+
2\beta\,\tilde{b}\,\Big(\beta^2-\frac{\tilde{b}^2}{4}\Big)\,w^4(v)\,,
\end{aligned}
\end{equation}
we conclude that
\begin{equation}
\begin{aligned}\nonumber
\langle J_1w^3,w^4\rangle&=-\langle J_1w^1,w^2\rangle=1,\\
\langle J_1w^3,w^2\rangle&=\langle J_1w^1,w^4\rangle=0,\\
\langle J_1w^2,w^4\rangle&=\langle J_1w^1,w^3\rangle=-\frac{\tau\,\cos\vartheta}{\sqrt{-B}}.
\end{aligned}
\end{equation}
Then,
$$-\langle J_1e_1,e_2\rangle=\langle J_1e_3,e_4\rangle=1,$$
$$\langle J_1e_1,e_4\rangle=\langle J_1e_1,e_3\rangle=\langle J_1e_2,e_3\rangle=\langle J_1e_2,e_4\rangle=0.$$ Therefore, we obtain that $$J_1e_1=-e_2,\qquad J_1e_3=-e_4.$$
Consequently, if we consider the orthonormal basis $\{\hat{E}_i\}_{i=1}^4$ of $\r^4_2$ given by
$$
\hat{E}_1=(1,0,0,0)\,,\quad \hat{E}_2=(0,1,0,0)\,,\quad \hat{E}_3=(0,0,1,0)\,,\quad \hat{E}_4=(0,0,0,1)\,,
$$
there must exists a 1-parameter family of matrices $A(v)\in \mathrm{O}_2(4)$, with $J_1A(v)=-A(v)J_1$, such that $e_i(v)=A(v)\hat{E}_i$, $i\in\{1,\dots,4\}$.
As $$F=\langle F,e_1\rangle\,e_1+\langle F,e_2\rangle\,e_2-\langle F,e_3\rangle\,e_3-\langle F,e_4\rangle\,e_4,$$ computing $\langle F,e_i\rangle $ and substituting  $e_i(v)=A(v)\hat{E}_i$, we obtain that $
F(u,v)=A(v)\,\gamma(u),
$ where the curve $\gamma(u)$ of $\S$ is given in \eqref{gamma}.

Let now examine the $1$-parameter family $A(v)$ that, according to \eqref{eq-descrizione-A},
depends on four functions $\xi_1(v),\xi_2(v),\xi_3(v)$ and $\xi(v)$. Similarly to what we have done in the proof of Theorem~\ref{teo-principal1} we have that the condition
$$
\frac{\partial}{\partial u}\langle F_v, F_v\rangle_{| u=0}=0\,
$$
implies that
the functions  $\xi_1(v),\xi_2(v),\xi_3(v)$ and $\xi(v)$ satisfy the equation
$$\xi'\,[2\sin(\xi_2-\xi_3)\,\xi_1'-(\xi_2'+\xi_3'-\xi')\cos(\xi_2-\xi_3)\,\sinh(2\,\xi_1)]=0.$$
Then we have two possibilities:
\begin{itemize}
\item[(i)] $\xi=\cst$;
\item[] or
\item[(ii)] $2\sin(\xi_2-\xi_3)\,\xi_1'-(\xi_2'+\xi_3'-\xi')\cos(\xi_2-\xi_3)\,\sinh(2\,\xi_1)=0$.
\end{itemize}
Also in this case, using the same argument as in Theorem~\ref{teo-principal1},  condition (ii) would implies that the surface is a Hopf tube, thus we can assume
that $\xi=\cst$.

Finally, a long but straightforward computation shows that, in the case $\xi=\cst$, \eqref{viceversa2} is equivalent to \eqref{eq-alpha123second}.\\

The converse of the theorem follows immediately from Proposition~\ref{pro-viceversa} since a direct
calculation shows that $g_{\tau}(F_u,F_u)=g_{\tau}(E_1,F_u)=\sin^2\vartheta$ which is \eqref{viceversa1}
while \eqref{eq-alpha123second} is equivalent to \eqref{viceversa2}.
\end{proof}

\end{document}